%
%
%
\documentclass[english,12pt]{amsart}

\usepackage[latin9]{inputenc}
\usepackage{amsmath,multirow}
\usepackage{amsthm}
\usepackage{amssymb}
\usepackage{url} 
\usepackage{color} 
\usepackage{caption}
\usepackage{array}
\usepackage{amsopn}
\usepackage{mathtools}
\usepackage{bm,color}

\usepackage{algorithm}          
\usepackage{algpseudocode}      
\algblock{Input}{EndInput}
\algnotext{EndInput}
\algblock{Output}{EndOutput}
\algnotext{EndOutput}
\algblock{Compute}{EndCompute}
\algnotext{EndCompute}
\algblock{Initialize}{EndInitialize}
\algnotext{EndInitialize}
\algblock{Procedure}{EndProcedure}
\algnotext{EndProcedure}
%

%
\algblockdefx[Foreach]{Foreach}{EndForeach}[1]{\textbf{for each} #1 \textbf{do}}{\textbf{end for}}

\makeatletter
 
%
%
\headheight=8pt       \topmargin=-10pt
\textheight=611pt     \textwidth=456pt
\oddsidemargin=6pt   \evensidemargin=6pt

\theoremstyle{plain}
\newtheorem{thm}{\protect\theoremname}[section]
  \newtheorem{proposition}[thm]{\protect\propositionname}
  
  \newtheorem{lem}[thm]{\protect\lemmaname}
  \newtheorem{alg}[thm]{Algorithm}

  \theoremstyle{plain}
  
  \theoremstyle{plain}
  
  \theoremstyle{remark}
  \newtheorem{rem}[thm]{\protect\remarkname}
  \theoremstyle{definition}
  \newtheorem{example}[thm]{\protect\examplename}
  
  \theoremstyle{definition}


\newcommand{\PP}{\mathbb{P}}
\newcommand{\RR}{\mathbb{R}}
\newcommand{\CC}{\mathbb{C}}
\newcommand{\ZZ}{\mathbb{Z}}
\newcommand{\NN}{\mathbb{N}}
\newcommand{\calV}{\mathcal{V}}  

\DeclareMathOperator{\Mat}{Mat}

\newcommand{\ndot}{{n_\bullet}}   
\newcommand{\npdot}{{n_\bullet'}}   
\newcommand{\nppdot}{{n_\bullet''}}   

\newcommand{\blambda}{\boldsymbol{\lambda}}
\newcommand{\btheta}{\boldsymbol{\theta}}
\newcommand{\bfe}{\mathbf{e}}
\newcommand{\bfx}{\mathbf{x}}
\newcommand{\bfy}{\mathbf{y}}
\newcommand{\bfu}{\mathbf{u}}
\newcommand{\bfz}{\mathbf{z}}

\DeclareMathOperator{\aff}{aff}
\DeclareMathOperator{\rank}{rank}

\newcommand{\bL}{\mathbf{L}}  
\newcommand{\bfT}{\mathbf{T}}
\newcommand{\bM}{\mathbf{M}}  

\DeclareMathOperator{\Dim}{Dim} 
\DeclareMathOperator{\Deg}{Deg} 

\newcommand{\ol}[1]{\overline{#1}}



\newcommand{\defcolor}[1]{{\color{blue}#1}}
\newcommand{\demph}[1]{\defcolor{{\sl #1}}}
  

\numberwithin{equation}{section} 

\makeatother

%
%
\usepackage{babel}
  \providecommand{\corollaryname}{Corollary}
  \providecommand{\definitionname}{Definition}
  \providecommand{\examplename}{Example}
  \providecommand{\notationname}{Notation}
  \providecommand{\lemmaname}{Lemma}
  \providecommand{\propositionname}{Proposition}
  \providecommand{\questionname}{Question}
  \providecommand{\problemname}{Problem}
  \providecommand{\remarkname}{Remark}
\providecommand{\theoremname}{Theorem}
\newcounter{FNC}[page]
\def\fauxfootnote#1{{\addtocounter{FNC}{2}${\color{magenta}^\fnsymbol{FNC}}$%
     \let\thefootnote\relax\footnotetext{{\color{magenta}$^\fnsymbol{FNC}$#1}}}}

\begin{document}

\title{A numerical toolkit for multiprojective varieties}
\author[J.~D.~Hauenstein]{Jonathan D.~Hauenstein}
\address{Department of Applied \& Computational Mathematics \& Statistics\\
         University of Notre Dame\\
         Notre Dame, IN  46556\\         
         USA}
\email{hauenstein@nd.edu}
\urladdr{{http://www.nd.edu/~jhauenst}}
\author[A.~Leykin]{Anton Leykin} 
\address{Anton Leykin\\
         School of Mathematics\\ 
         Georgia Institute of Technology\\ 
         686 Cherry Street\\ 
         Atlanta, GA 30332-0160 USA\\ 
         USA} 
\email{leykin@math.gatech.edu} 
\urladdr{http://people.math.gatech.edu/~aleykin3} 
\author[J.~I.~Rodriguez]{Jose Israel Rodriguez}
\address{Department of Mathematics\\
         University of Wisconsin\\
         Madison, WI 53706\\         
         USA}
\email{Jose@math.wisc.edu}
\urladdr{{http://www.math.wisc.edu/~jose/}}
\author[F.~Sottile]{Frank Sottile}
\address{Department of Mathematics\\
         Texas A\&M University\\
         College Station, TX 77843\\
         USA}
\email{sottile@math.tamu.edu}
\urladdr{{http://www.math.tamu.edu/~sottile/}}

\thanks{Research of Hauenstein supported in part by NSF grant CCF-1812746} 
\thanks{Research of Leykin supported in part by NSF grant DMS-1151297} 
\thanks{Research of Rodriguez supported in part by NSF grant DMS-1402545}         
\thanks{Research of Sottile supported in part by NSF grant DMS-1501370}
\subjclass[2010]{65H10}
%
%
\keywords{numerical algebraic geometry, multiprojective variety}

\begin{abstract}
 A numerical description of an algebraic subvariety of projective space is given by a general linear section, called a
 witness set. 
 For a subvariety of a product of projective spaces (a multiprojective variety), the corresponding numerical description
 is given by a witness collection, whose structure is more involved.
 We build on recent work to develop a toolkit for the numerical manipulation of multiprojective varieties that operates on
 witness collections and use this toolkit in an algorithm for numerical irreducible decomposition of multiprojective
 varieties.
 The toolkit and decomposition algorithm are illustrated throughout 
 in a series of examples.
\end{abstract}

\maketitle

\vspace{-14pt}
\section*{Introduction}\label{s:intro}
Numerical algebraic geometry~\cite{SW05} uses numerical analysis to manipulate and study algebraic varieties on a
computer. 
In numerical algebraic geometry, a subvariety $X$ of affine or projective space is represented by a witness set, which
includes a finite set of points in a general linear section of $X$~\cite{SV}.
Algorithms to manipulate a variety operate on its witness sets.
A fundamental algorithm is numerical irreducible decomposition~\cite{SVW_decomposition}, which uses
monodromy~\cite{SVW_monodromy} and a trace test~\cite{SVW_trace} to partition a witness set of a reducible variety into
witness sets for each irreducible component.

Oftentimes, a variety possesses additional structure, such as multihomogeneity, which is when its defining polynomials
are separately homogeneous in disjoint subsets of variables.
For example, the determinant $\det(x_{i,j})$ is separately linear in the variables of each column.
Such a variety is naturally a subvariety of a product of projective spaces (a multiprojective variety).
For the $n\times n$ determinant, this product is $\PP^{n-1}\times\dotsb\times\PP^{n-1}$ ($n$ factors).
We seek algorithms for multiprojective varieties that are adapted to their structure.

Algorithms for numerically solving systems of multihomogeneous polynomials are classical~\cite{MS87}.
A useful notion of witness set---a witness collection---for multiprojective varieties, along with fundamental 
algorithms, was given in~\cite{HR15}.
There, it was observed that the trace test could not be applied naively to a witness collection.
Consequently, for numerical irreducible decomposition, a witness collection for a multiprojective variety must be
transformed into a witness set for a projective or affine variety.
Since a multiprojective variety is a projective variety under the Segre embedding, that could be used for numerical
irreducible decomposition.
In general, the Segre embedding dramatically increases the ambient dimension and degree
as we show in Section~\ref{S:I.7}. 
Passing instead to an affine patch in the product of projective spaces preserves the ambient dimension, but
we do not know an algorithm of acceptable complexity to compute a witness set from a witness collection unless the variety
is a curve. 
This is because for curves the Segre embedding can be used without increasing the
  degree so much as seen in Section~\ref{sec:coarse}.

A version of numerical irreducible decomposition was proposed for subvarieties in the product of two projective
spaces~\cite{traceLRS}.
This reduces numerical irreducible decomposition to  that of a curve,
decreasing the size of the witness sets.
We extend that analysis to arbitrary multiprojective varieties.
We present four geometric constructions and corresponding algorithms that operate on witness collections, and together
provide a toolkit for the numerical manipulation of multiprojective varieties.
A key ingredient is the support of a multiprojective variety~\cite{CLZ}, which is a multiprojective
version of dimension. 

The computation of this (multi)dimension locally at a point reduces to linear algebra.
When the multidimension decomposes as a product, the corresponding variety is also a product as is a witness collection for
it. 
We next explain how witness collections transform under birational maps that change the multiprojective structure, and
finally how a witness collection behaves under slicing with a hyperplane.
We also give an algorithm based on monodromy for computing a witness collection.
The utility of this toolkit is illustrated in an algorithm for numerical irreducible decomposition of multiprojective
varieties.
We use these tools to reduce the numerical irreducible decomposition to that of a curve in affine space, to which we may
apply an efficient trace test.
This generalizes the method of~\cite{traceLRS}, from two to arbitrarily many projective factors.

Algorithms in numerical algebraic geometry typically operate on affine varieties.
A subvariety $X\subset\PP^n$ of projective space is replaced by its intersection \defcolor{$X_{\aff}$} with a general
affine patch $\CC^n\subset\PP^n$ where a general linear polynomial $\ell$ does not vanish.
The same approach could be followed for a multiprojective variety $X\subset\PP^{n_1}\times\dotsb\times\PP^{n_k}$ by taking
affine patches in each projective factor and combining them, giving $X_{\aff}\subset\CC^{n_1+\dotsb+n_k}$.
This neglects the given structure and increases the size and complexity of the witness set, which is
particularly significant when $X$ is neither a curve nor a hypersurface.
We will work with multiaffine varieties $X_{\aff}\subset\CC^{n_1}\times\dotsb\times\CC^{n_k}$, using algorithms that respect
this decomposition and are compatible with the multiprojective structure of $X$.

This paper is structured as follows. 
In Section~\ref{sec:wc}, we define witness collections of multiprojective and multiaffine varieties, and introduce some
running examples.
In Section~\ref{sec:ldt}, we give an algorithm to compute (multi)dimension locally and an algorithm based on monodromy to compute a witness collection.
In Section~\ref{sec:product}, we show how to detect and exploit that a variety is a product.
In Section~\ref{sec:coarse}, we show how to transform witness collections under the birational maps that correspond to
changing the multiprojective structure of a variety.
In Section~\ref{sec:slice}, we show how to perform a dimension reduction based on intersections with linear spaces that
preserves (ir)reducibility.
In Section~\ref{S:NID}, we sketch an algorithm for numerical irreducible decomposition that uses this toolkit.
In Section~\ref{sec:FiberProductEx}, we consider two examples based on fiber products which naturally yield multihomogeneous systems.
Apart from showcasing our toolkit, these examples demonstrate that using multiprojective structure leads to significant reduction in the size of computations.

%
\section{Background}\label{sec:wc}
For a finite set $F$ of polynomials, let \defcolor{$\calV(F)$} be its variety, the subset of affine or projective
space (or products thereof) where every polynomial in $F$ vanishes.
A variety will also be a union of irreducible components of such a set $\calV(F)$.
We first recall numerical homotopy continuation, then witness sets~\cite[Ch.~13]{SW05}, multiprojective
varieties,  witness collections~\cite{HR15}, and multiaffine varieties.
We end by introducing our running examples.

\subsection{Numerical homotopy continuation}

Many algorithms described herein are based on numerical homotopy continuation.
A \demph{homotopy} is a system of polynomials $H(\bfx;t)$ 
(\mbox{$\bfx\in\CC^n$}, $t\in\CC$), 
that interpolates between two
systems---the \demph{start system} when $t=1$ and the \demph{target system} when $t=0$---in a particular way.
We require that $\calV(H)\subset\CC^n_{\bfx}\times\CC_t$ contains a curve $C$ that is a union of components of
$\calV(H)$ which projects dominantly to~$\CC_t$, and that $t=1$ is a regular value of this projection
$\pi\colon C\to\CC_t$.
We further require that~$C$ is bounded above a neighborhood of $1\in\CC_t$, and that  $\calV(H)$ is smooth at
$W\subset\pi^{-1}(1)$. 
The start system $H(\bfx,1)=0$ has $W$ among its isolated solutions.
The target system is $H(\bfx,0)=0$ and its intended solutions are the points of $C$ above $t=0$.

Given a homotopy $H$, we restrict $C$ to its points above the interval $[0,1]$ or above an arc in $\CC_t$ with
endpoints $\{0,1\}$. 
This gives a set of $|W|$ arcs in $\CC^n_{\bfx}\times\CC_t$, one for each point of $W$.
Each arc is either unbounded for $t$ near $0$ or it ends in a point of $\pi^{-1}(0)$.
Starting with points of $W$ and using numerical path-tracking to follow the corresponding arcs  will recover the isolated points of
$\pi^{-1}(0)$. 
In this way, we use the solutions $W$ of the start system to compute the solutions of the target system.
For more, see~\cite{Morgan,SW05}.

\subsection{Witness sets and numerical irreducible decomposition}\label{SS:witnessSets}
Let  $Y$ be an  irreducible subvariety of projective space $\PP^n$.
By Bertini's Theorem~\cite{jouanolou}, the dimension \defcolor{$\dim(Y)$} of $Y$ is the maximum number of general linear
polynomials that have a common zero on~$Y$, and its degree \defcolor{$\deg(Y)$} is the number of such common zeroes. 
For  a collection $L$ of $\dim(Y)$ general linear polynomials, the set $Y\cap\calV(L)$ of $\deg(Y)$ common zeroes is 
a \demph{linear section} of $Y$, 
called a \demph{witness point set} of $Y$.
If $F$ is a finite set of polynomials with $Y$ an irreducible component of $\calV(F)$, then the triple
$(F,L,Y\cap\calV(L))$ is a \demph{witness set} for $Y$. 

Suppose that $X\subset\PP^n$ is a union of irreducible components of $\calV(F)$.
A witness set for $X$ is composed of witness sets for each irreducible component of $X$.
We assume for simplicity that the linear sections are chosen coherently: Let $\defcolor{\ell_1},\dotsc,\defcolor{\ell_n}$
be general linear polynomials on $\PP^n$, and for each $e\in\{0,1,\dotsc,n\}$, set $\defcolor{L^e}:=(\ell_1,\dotsc,\ell_e)$.
For each dimension $e$, the $e$th witness set for $X$ is the triple  $(F,L^e,P^e)$ where $P^e$ is the set of
isolated points in $X\cap\calV(L^e)$.
If $X$ is \demph{equidimensional} of dimension $e$ (all components of $X$ have dimension $e$) then  $(F,L^e,X\cap\calV(L^e))$
is a witness set for $X$.
For this assertion/definition the generality of the $\ell_i$ is essential, by Bertini's Theorem.

\begin{rem}\label{r:move-witness}
 Given another collection $L'$ of $e$ linear polynomials, the convex combination $tL^e+(1-t)L'$ may be used in a homotopy
 $H(t)=(F,tL^e+(1-t)L')$ to transform the witness point set $P^e\subset X\cap\calV(L^e)$ into one lying in
 $X\cap\calV(L')$. 
 This homotopy can be used, for example, to test membership.
 In particular, if $X$ is equidimensional of dimension $e$, 
 $\bfx\in\PP^n$, and $L'$ is $e$ general linear polynomials 
 vanishing at $\bfx$, then $\bfx\in X$ if and only if~$\bfx$ is an endpoint of the homotopy $H(t)$
 with start points $X\cap\calV(L^e)$.\hfill$\diamond$
\end{rem}

A fundamental algorithm involving witness sets is numerical irreducible decomposition.
It first decomposes a witness set for $X$ into witness sets for $X_0,\dotsc,X_n$, where $\defcolor{X_e}\subset X$ is the
union of the irreducible components of $X$ of dimension $e$. 
When $X=X_e$, numerical irreducible decomposition computes the partition of $X\cap\calV(L)$ $(L=L^e)$ into subsets, each of
which is a linear section $Y\cap\calV(L)$ of an irreducible component $Y$ of $X$.

Numerically following the points of $X\cap\calV(L)$ as $L$ varies in a loop gives a monodromy permutation
\defcolor{$\omega$} of $X\cap\calV(L)$.  
The points belonging to a cycle of $\omega$ lie in the same irreducible component of $X$, and thus the cycles of
$\omega$ give a finer partition than the numerical irreducible decomposition.
Computing additional monodromy permutations coarsens this partition.
This monodromy break up algorithm~\cite{SVW_monodromy} gives a partition
$P_1\sqcup \dotsb\sqcup P_s$ of $X\cap\calV(L)$, where each $P_i\subset Y\cap\calV(L)$
for some irreducible component $Y$ of $X$.

The trace test~\cite{traceLRS,SVW_trace} is a heuristic stopping criterion for monodromy break up.
In it, the points of some part $P_i$ of the partition are numerically continued as $L$ moves in a general linear pencil. 
The average of the points in $P_i$ is collinear if and only if $P_i$ is a witness point set of a component.
Thus, when each part of the partition passes this \demph{trace test}, we have computed the numerical irreducible
decomposition.

This is unchanged if we replace projective varieties by affine varieties.
In practice, the algorithm operates on affine varieties, working in a random affine patch of $\PP^n$.

\subsection{Multiprojective varieties}\label{SS:Multiproj}
For more background, see~\cite[Ch.~8]{MS05}.
Let $k,n_1,\dotsc,n_k$ be positive integers and let $\defcolor{\PP^{\ndot}}:=\PP^{n_1}\times\dotsb\times\PP^{n_k}$ be
the indicated product of projective spaces.
Writing \defcolor{$\bfx_{i}$} for the indeterminates $x_{i,0},\dotsc,x_{i,n_i}$, we have that $\CC[\bfx_i]$ is the
homogeneous coordinate ring of $\PP^{n_i}$ and $\defcolor{\CC[\bfx]}:=\CC[\bfx_1,\dotsc,\bfx_k]$ is the coordinate ring of
$\PP^{\ndot}$.
This ring is \demph{multigraded}, its multihomogeneous elements $f(\bfx)$ are separately homogeneous in each variable
group $\bfx_1,\dotsc,\bfx_k$.
Such an element has a \demph{multidegree} which is a vector $(d_1,\dotsc,d_k)\in\NN^k$ where $d_i$ is the degree of
$f(\bfx)$ in the variable group $\bfx_i$.

A subvariety $X\subset\PP^{\ndot}$ (a \demph{multiprojective variety}) is a union of irreducible components of a set
$\calV(F)$, where $F\subset\CC[\bfx]$ is a finite set of multihomogeneous polynomials.
Each irreducible component $Y$ of $X$ has an intrinsic dimension $\dim(Y)$ as an algebraic variety.
As a subvariety of $\PP^{\ndot}$, its (extrinsic) dimension and degree are more
involved than for projective varieties.
This already occurs for hypersurfaces.
A multihomogeneous linear polynomial in $\CC[\bfx]$ has multidegree $(0,\dotsc,1,\dotsc,0)$:
it is linear in one variable group $\bfx_i$ and no other variables occur in it.
In particular, there are $k$ different types of `hyperplanes'.

There are similarly many different types of `linear' sections of multiprojective
varieties in $\PP^{\ndot}$.
Set $\defcolor{[\ndot]}:=\{(e_1,\dotsc,e_k)\in\NN^k\mid e_i\in\{0,1,\dots,n_i\}\}$ and let $\bfe\in[\ndot]$.
For each $i=1,\dotsc,k$, let $L_i$ be $e_i$ general linear polynomials in $\CC[\bfx_i]$ and write
$\bL^\bfe=(L_1,\dotsc,L_k)$.
Then $\calV(\bL^\bfe)\subset\PP^{\ndot}$ is a product of linear subspaces in the factors of
$\PP^{\ndot}$, where   the linear subspace in $\PP^{n_i}$ has dimension $n_i{-}e_i$.
When $Y\subset\PP^{\ndot}$ is an irreducible multiprojective variety with intrinsic dimension $\dim(Y)$, Bertini's
Theorem implies that $Y\cap\calV(\bL^{\bfe})$ is nonempty and finite only if $\dim(Y)=e_1+\dotsb+e_k=:\defcolor{|\bfe|}$.
Similarly, it is empty if $\dim(Y)<|\bfe|$ and, for $\dim(Y)>|\bfe|$, it is either empty or infinite.

The \demph{(multi)dimension} \defcolor{$\Dim(Y)$} of an irreducible multiprojective variety $Y\subset\PP^{\ndot}$ is the
set of vectors $\bfe\in[\ndot]$ such that $Y\cap\calV(\bL^\bfe)$ is finite and nonempty.
In~\cite{CLZ} this is called the support of $Y$.
Unlike for projective varieties, $\Dim(Y)$ is a set.
Note that $\bfe\in\Dim(Y)$ implies that $|\bfe|=\dim(Y)$.
The \demph{(multi)degree} of $Y$ is the map $\defcolor{\Deg_Y}\colon\Dim(Y)\to\NN$, where
$\Deg_Y(\bfe)$ is the number of points in the linear section $Y\cap\calV(\bL^\bfe)$.
For convenience, we extend the domain $\Deg_Y$ to $[\ndot]$, where if $\bfe\not\in\Dim(Y)$, then $\Deg_Y(\bfe)=0$.

If $X\subset\PP^{\ndot}$ has irreducible decomposition $X=Y_1\cup\dotsb\cup Y_s$, then we
define $\Deg_X$ by
\[
  \defcolor{\Deg_X}(\bfe)\ :=\ \sum_{j=1}^s \Deg_{Y_j}(\bfe)
  \qquad \mbox{ for }\bfe\in[\ndot]\,.
\]
%
%
Likewise, the dimension of $X$ is the support of $\Deg_X$,
\[
  \defcolor{\Dim(X)}\ =\ \{\bfe\in[\ndot]\mid \Deg_X(\bfe)>0\}
  \ =\ \bigcup_{j=1}^s \Dim(Y_j)\,.
\]
When $k=1$, this reduces to the dimension and degree of a projective variety $X\subset\PP^n$, where the
dimension of $X$ is the set of dimensions of its irreducible components and the degree sends $e$ to the degree
of the equidimensional part of $X$ of dimension $e$.

\begin{rem}\label{R:intTheory}
  The structure of the extrinsic dimension and degree of a multiprojective variety is a
 consequence of the structure of the homology groups of $\PP^{\ndot}$~\cite{Fulton}.
 The homology of $\PP^n$ has a $\ZZ$-basis \defcolor{$T^e$} for $e=0,\dotsc,n$, where $T^e$ is the class $[\Lambda^e]$ of a
 linear subspace $\Lambda^e$ of dimension $e$.
 Then the class of a subvariety $X\subset\PP^n$ is
 \[
   [X]\ =\ \sum_{e=0}^n \Deg_X(e) T^e\ .
 \]
 The homology of $\PP^{\ndot}$ has a $\ZZ$-basis $\defcolor{\bfT^\bfe}:=[\Lambda_1^{e_1}\times\dotsb\times\Lambda_k^{e_k}]$
 for $\bfe\in[\ndot]$, where $\Lambda_i^{e_i}\subset\PP^{n_i}$ is a linear space of dimension $e_i$.
 Then the class of a multiprojective variety $X\subset\PP^{\ndot}$ is
 \[
   [X]\ =\ \sum_{\bfe\in[\ndot]} \Deg_X(\bfe) \bfT^\bfe\ .\eqno{\diamond}
 \]
\end{rem}

A \demph{witness collection} for an irreducible multiprojective variety $Y\subset\PP^{\ndot}$ that is a component of
$\calV(F)$ is a map that assigns each $\bfe\in\Dim(Y)$ to $(F,\bL^{\bfe},Y\cap\calV(\bL^{\bfe}))$.
This triple is an \demph{$\bfe$-witness set of $Y$}
with $Y\cap\calV(\bL^{\bfe})$ an \demph{$\bfe$-witness point set of $Y$}.
As with ordinary witness sets, we assume that the linear polynomials are chosen coherently.
That is, for each $i\in\{1,\dots,k\}$, let $\defcolor{\ell_{i,1}},\dotsc,\defcolor{\ell_{i,n_i}}\in\CC[\bfx_i]$ be general linear
polynomials. 
For $\bfe\in[\ndot]$, set $\defcolor{L_i^{e_i}}:=(\ell_{i,1},\dotsc,\ell_{i,e_i})$ and
$\bL^\bfe:=(L_1^{e_1},\dotsc,L_k^{e_k})$.
If $X$ is a union of components of $\calV(F)$, then a witness collection for $X$ is the map that sends $\bfe\in\Dim(X)$ to
$(F,\bL^\bfe,P^\bfe)$, where~$P^\bfe$ is the set of isolated points of $X\cap\calV(\bL^{\bfe})$.

\begin{rem}\label{r:move-witness-muttiprojective}
 Given another collection $\bL'$ of $\bfe$ linear polynomials with $e_i$ in $\CC[\bfx_i]$, the convex combination
 $t\bL^\bfe+(1-t)\bL'$ may be used in a homotopy $H(t)=(F,t\bL^\bfe+(1-t)\bL')$ to transform the witness point set
 $P^\bfe\subset X\cap\calV(\bL^\bfe)$ into one lying in $X\cap\calV(\bL')$. 
 Similar to Remark~\ref{r:move-witness}, this homotopy can be used, for example, to test membership.\hfill$\diamond$
\end{rem}

 This membership test for multiprojective varieties relies on the result that if $X$ is irreducible and
 $\bfx\in\PP^{\ndot}$, then $\bfx\in X$ if and only if  there exists $\bfe\in\Dim(X)$ such that $\bfx$ is an endpoint of
 the homotopy $H(t)=(F,t\bL^\bfe+(1-t)\bL')$ with start points $X\cap\calV(\bL^\bfe)$, where  
 $\bL'$ is $\bfe$ general linear polynomials vanishing at $\bfx$.

\begin{alg}[Membership test for multiprojective varieties~\protect{\cite[Alg.~3]{HR15}}]
  \label{A:membership-multi}
  \mbox{\ }\newline
  {\bf Input:} Witness collection for an irreducible
  multiprojective variety $X\subset\PP^{\ndot}$
  and \mbox{$\bfx\in\PP^{\ndot}$}. \newline
  {\bf Output:} A boolean $B_\bfx$ which answers if $\bfx\in X$. \newline
  {\bf Do:} For each $\bfe\in\Dim(X)$, choose $\bL'$ to be $\bfe$ general linear polynomials vansihing at $\bfx$ and
  return ``true'' if  $\bfx$ is an endpoint of the homotopy $H(t) = (F,t \bL^\bfe + (1-t) \bL')$ with start points
  $X\cap\calV(\bL^{\bfe})$.
  Return ``false'' after testing all possible $\bfe\in\Dim(X)$.
\end{alg}

%

\subsection{Multiaffine varieties}\label{SS:MultiAffine}
Let $X\subset\PP^{\ndot}$ be a multiprojective variety.
Choosing an affine patch $\CC^{n_i}\subset\PP^{n_i}$ in each factor, 
$\defcolor{X_{\aff}}:= X\cap(\CC^{n_1}\times\dotsb\times\CC^{n_k})$  is an affine variety that retains much 
information about $X$.
To keep track of its multiprojective origins, we retain the decomposition $\CC^{n_1}\times\dotsb\times\CC^{n_k}$ 
from the factors of $\PP^{\ndot}$.
Write $\defcolor{\CC^{\ndot}}$ for $\CC^{n_1}\times\dotsb\times\CC^{n_k}$ and call a subvariety of $\CC^{\ndot}$ a
\demph{multiaffine variety}.
Algorithms for a multiprojective variety~$X$ operate locally on a corresponding multiaffine variety
$X_{\aff}$.

Let $i\in\{1,\dotsc,k\}$.
The affine patch $\CC^{n_i}$ has coordinate ring the polynomial ring $\CC[\bfy_i]$ with variables
$\defcolor{\bfy_i}:=(y_{i,1},\dotsc,y_{i,n_i})$.
This ring is not graded.
The coordinate ring of $\CC^{\ndot}$ is $\CC[\bfy]:=\CC[\bfy_1,\dotsc,\bfy_k]$.
This is an ordinary polynomial ring whose only structure is the indicated grouping of its variables.
A multihomogeneous polynomial $f(\bfx)\in\CC[\bfx]$ (multi)dehomogenizes to a polynomial $f(\bfy)\in\CC[\bfy]$.

The dimension of a multiaffine variety $X\subset\CC^{\ndot}$ is a set $\Dim(X)\subset[\ndot]$.
Its degree is a map $\Deg_X\colon [\ndot]\to\NN$.
These are defined in the same way as for multiprojective varieties, except that a homogeneous linear polynomial
$\ell(\bfx_i)\in\CC[\bfx_i]$ is replaced by its dehomogenization $\ell(\bfy_i)\in\CC[\bfy_i]$, which is a degree one
polynomial, or \demph{affine form}.
When the multiaffine patch $\CC^{\ndot}\subset\PP^{\ndot}$ is general, $\Dim(X)=\Dim(X_{\aff})$ and
$\Deg_X=\Deg_{X_{\aff}}$.

There is a second and more important reason (besides that our algorithms operate on them) to introduce multiaffine
varieties.
A key step in our numerical irreducible decomposition for multiprojective varieties in $\PP^{\ndot}$, called coarsening and described in Section~\ref{sec:coarse}, 
requires passing to a multiaffine variety (multi-dehomogenizing) and then rehomogenizing it into a different
multiprojective variety in a different multiprojective space.

\subsection{Monodromy and partial witness collections}\label{SS:partial}
In~\cite{HR15}, algorithms based on regeneration~\cite{Regen} were given to compute a
witness collection of a multiprojective variety.
We describe an alternative method based on monodromy.
Let $Y\subset\PP^{\ndot}$ be an irreducible component of $\calV(F)$, where $F\subset\CC[\bfx]$ is a finite set of
multihomogeneous polynomials.
Suppose that $\ell_{ij}\in\CC[\bfx_i]$ are general linear polynomials as in Subsection~\ref{SS:Multiproj}.
A \demph{partial witness collection} for $Y$ is a map $\Dim(Y)\ni\bfe\mapsto(F,\bL^\bfe,W_{\bfe})$, where
$W_\bfe\subset Y\cap\calV(\bL^\bfe)$ and at least one set $W_\bfe$ is nonempty.

The monodromy solving algorithm~\cite{Monodromy} gives a method to complete a partial witness set
to a witness set.
If in Subsection~\ref{SS:witnessSets}, we have a variety $X\subset\PP^n$ of pure dimension $e$ and a partial witness set
$W\subset X\cap\calV(L)$ ($L$ consists of $e$ linear polynomials, with the intersection transverse),
following points of $W$ as $L$ varies along loops both finds more points of $X\cap\calV(L)$ and computes a putative
numerical irreducible decomposition, with the caveat that the points found and subsequent decomposition will only lie on
the irreducible components of $X$ that contained points in the original set $W$.
The transversality of $X\cap\calV(L)$ at points of $W$ is necessary for there to be a homotopy starting at points of $W$.

This also may begin with a nonempty partial $\bfe$-witness set $W_{\bfe}\subset X\cap\calV(\bL^\bfe)$
of a multiprojective or multiaffine variety $X$ with $\bfe\in\Dim(X)$.
That is, monodromy may be used to complete $W_\bfe$ to a full $\bfe$-witness set $X\cap\calV(\bL^\bfe)$, at least for the
components of $X$ that contain points of $W_\bfe$.
In Section~\ref{sec:ldt}, we explain a more general procedure.

\subsection{Examples}
We give the dimension and multidegree of some multiaffine varieties.
Subsequent sections use these examples to demonstrate the numerical toolkit.

\begin{example}\label{Ex:AlgMatroid}
Let $Y\subset(\PP^1)^k$ be irreducible of intrinsic
dimension $e$.
Then $\Dim(Y)$ consists of $01$-vectors with $e$ 1s and $k{-}e$ 0s; the positions of the 1s give a subset of
$\{1,\dotsc,k\}$ of cardinality $e$.
For each such subset $\bfe$, let $\pi_\bfe\colon(\PP^1)^k\twoheadrightarrow(\PP^1)^e$ be the surjection onto the factors
corresponding to $\bfe$.
Our definitions imply that for $|\bfe|=e$, $\bfe\in\Dim(Y)$  if and only if
$\pi_\bfe \colon Y \to (\PP^1)^e$ is surjective.
Thus $\Dim(Y)$ is the algebraic matroid \cite[p.\ 211]{Oxley} of $Y_{\aff}\subset\CC^k$.
Its bases are subsets $y_{i_1},\dotsc,y_{i_e}$ of cardinality $e$ of the variables $y_1,\dotsc,y_k$ that are algebraically
independent in the coordinate ring of $Y_{\aff}$.\hfill$\diamond$
\end{example}

\begin{example}\label{Ex:Octahedron}
 We consider two multiaffine varieties in $\CC\times\CC\times\CC\times\CC$.
 Suppose that its coordinates are $x,y,z,w$ and consider the three polynomials
 \begin{eqnarray*}
  f&=& 1 + 2x + 3y^2 + 4z^3 + 5w^4\,,\\
  g&=& 1 + 2x + 3y + 5z + 7w\,, \ \mbox{ and}\\
  h&=&1+2x+3y+5z+7w+11xy+13xz+17xw+19yz+23yw+29zw\\
   &&+31xyz+37xyw+41xzw+43yzw+47xyzw\,.
 \end{eqnarray*}
 Let $X:=\calV(f,g)$ and $Y:=\calV(f,h)$, which are surfaces.
 Both have the same dimension, $\{1100, 1010, 1001, 0110, 0101, 0011\}$ (we omit commas).
 These form the second hypersimplex, which is an octahedron in their affine span.
 We display this in Figure~\ref{F:octahedron}.
 \begin{figure}[htb]
 \centering
   \begin{picture}(133,103)
     \put(20, 9){\includegraphics{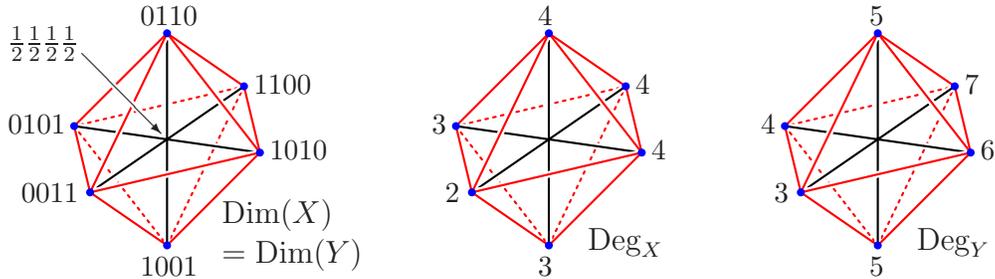}}
     \put(0,85){\small$\frac{1}{2}\frac{1}{2}\frac{1}{2}\frac{1}{2}$}
     \thicklines {\color{white} \put(27,84){\vector(1,-1){28}}
        \put(28,84){\vector(1,-1){28}}\put(29,84){\vector(1,-1){28}}}
     \thinlines \put(28,84){\vector(1,-1){30}}
     \put(50, 0){\small$1001$}     \put( 50,95){\small$0110$}
     \put( 5,28){\small$0011$}     \put( 93,69){\small$1100$}
     \put( 0,54){\small$0101$}     \put( 99,44){\small$1010$}
     \put(81,20){$\Dim(X)$}
     \put(81, 5){$=\Dim(Y)$}
   \end{picture}
   \qquad
   \begin{picture}(97,103)(16,0)
     \put(20, 9){\includegraphics{figures/Octahedron}}
     \put(56, 0){\small$3$}     \put( 56,95){\small$4$}
     \put(21,28){\small$2$}     \put( 93,69){\small$4$}
     \put(16,54){\small$3$}     \put( 99,44){\small$4$}
     \put(75,10){$\Deg_X$}
   \end{picture}
   \qquad
   \begin{picture}(97,103)(16,0)
     \put(20, 9){\includegraphics{figures/Octahedron}}
     \put(57, 0){\small$5$}     \put( 57,95){\small$5$}
     \put(21,28){\small$3$}     \put( 93,69){\small$7$}
     \put(16,54){\small$4$}     \put( 99,44){\small$6$}
     \put(75,10){$\Deg_Y$}
   \end{picture}
   \caption{Dimension and degree of multiaffine varieties.}
   \label{F:octahedron}
\end{figure}

\noindent Both $g$ and $h$ are the dehomogenization of multilinear polynomials on $(\PP^1)^4$.
 The difference between $\Deg_X$ and $\Deg_Y$ is that  $\calV(f,g)$ is a reducible variety in $(\PP^1)^4$ which has
 components not meeting the given multiaffine patch so that~$X$ is a component of $\calV(f,g)$ in $(\PP^1)^4$.
 In contrast, $h$ is sufficiently general so that $Y$ is dense in  $\calV(f,h)$ in
 $(\PP^1)^4$.~\hfill$\diamond$ 
\end{example}

\begin{example}\label{Ex:Richardson}
  Suppose that $\ndot=(3,3,3)$.
  Let $M=(y_{i,j})_{i,j=1}^3$ be a $3\times 3$ matrix with rows the variable groups $\bfy_1,\bfy_2,\bfy_3$
  of $\CC^{\ndot}$.
  Set $C:=(I_3 \mid M)^T$, a $6\times 3$ matrix, and let $N_1,N_2$ be general complex $6\times 2$ matrices.
  The conditions $\rank(C \mid N_i)\leq 4$ for $i=1,2$ define an irreducible subvariety $Y$ of $\CC^9$ of dimension five.
  (Taking the column span of $C$ parameterizes a dense open subset of the Grassmannian $G(3,6)$, each
  condition $\rank(C \mid N_i)\leq 4$ gives a codimension two Schubert variety, and these are in general position by the
  choice  of the $N_i$.
  Thus $Y$ is an open subset of a Richardson variety.)

  Each condition  $\rank(C \mid N_i)\leq 4$ is given by cubic determinants (minors) of the six $5\times 5$ matrices
  obtained by removing a row of $(C \mid N_i)$.
  Let \defcolor{$f_{i,j}$} be the minor when row $j$ is removed.
  It has degree one in each variable group $\bfy_1,\bfy_2,\bfy_3$, and so $Y$ is a multiaffine subvariety of
  $\CC^{\ndot}$.
  Its dimension is the set $\{\bfe\in[\ndot]\mid |\bfe|=5\}$, which consists of the twelve integer points in the hexagon on 
  the left below.
  On the right is its multidegree, where $\Deg_Y(\bfe)$ is displayed adjacent to $\bfe$.
\begin{equation}\label{Eq:Richardson}
  \raisebox{-50pt}{\begin{picture}(128,105)(-19,0)
    \put(-1.5,8.5){\includegraphics{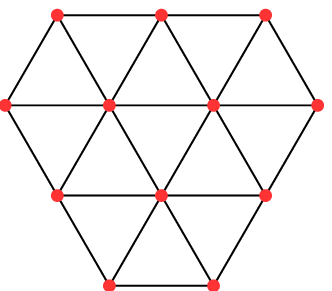}}
    \put(  0,92){\small$032$} \put(37,92){\small$131$} \put( 75,92){\small$230$}
    \put(-19,59){\small$023$}    \put( 93,59){\small$320$}
    \put( -4,32){\small$113$}    \put( 78,32){\small$311$}
    \put( 21, 0){\small$203$}    \put( 52, 0){\small$302$}
    \put(20,65){{\color{white}\rule{20pt}{10pt}}}  \put(21,66.5){\small$122$}
    \put(51,65){{\color{white}\rule{20pt}{10pt}}}  \put(52,66.5){\small$221$}
    \put(35,39){{\color{white}\rule{20pt}{10pt}}}  \put(37,40.5){\small$212$}
  \end{picture}
  \qquad
  \begin{picture}(98,100)(-2,-1)
    \put(-.5,7.5){\includegraphics{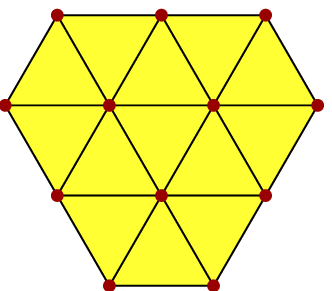}}

   \put(13,92){{\color{blue}$1$}} \put(43,92){{\color{blue}$2$}}\put(73,92){{\color{blue}$1$}}
   \put(-2,68){{\color{blue}$1$}} \put(28,68){{\color{blue}$3$}}
            \put(58,68){{\color{blue}$3$}} \put(88,68){{\color{blue}$1$}}
    \put(13,42){{\color{blue}$2$}} \put(43,42){{\color{blue}$3$}}\put(73,42){{\color{blue}$2$}}
    \put(28,16){{\color{blue}$1$}} \put(58,16){{\color{blue}$1$}}
  \end{picture}}
\end{equation}
 Replacing the twelve minors $f_{i,j}$ defining the rank conditions by the subset $f_{1,3},f_{1,5},f_{2,4},f_{2,6}$
 gives a complete intersection with four components, one of which is $Y$. 
 Two have the same dimension as $Y$ and one has a different dimension.
 We display their multidegrees below.
\[
   \begin{picture}(98,91)(-2,6.5)
    \put(-.5,6.5){\includegraphics{figures/Richardson}}

   \put(13,91){{\color{blue}$1$}} \put(43,91){{\color{blue}$1$}}\put(73,91){{\color{blue}$1$}}
   \put(-2,67){{\color{blue}$1$}} \put(28,67){{\color{blue}$3$}}
            \put(58,67){{\color{blue}$3$}} \put(88,67){{\color{blue}$1$}}
    \put(13,41){{\color{blue}$1$}} \put(43,41){{\color{blue}$3$}}\put(73,41){{\color{blue}$1$}}
    \put(28,15){{\color{blue}$1$}} \put(58,15){{\color{blue}$1$}}
  \end{picture}
  \qquad
   \begin{picture}(98,91)(-2,6.5)
    \put(-.5,6.5){\includegraphics{figures/Richardson}}

   \put(13,91){{\color{blue}$1$}} \put(43,91){{\color{blue}$1$}}\put(73,91){{\color{blue}$1$}}
   \put(-2,67){{\color{blue}$1$}} \put(28,67){{\color{blue}$3$}}
            \put(58,67){{\color{blue}$3$}} \put(88,67){{\color{blue}$1$}}
    \put(13,41){{\color{blue}$1$}} \put(43,41){{\color{blue}$3$}}\put(73,41){{\color{blue}$1$}}
    \put(28,15){{\color{blue}$1$}} \put(58,15){{\color{blue}$1$}}
  \end{picture}
  \qquad
   \begin{picture}(98,91)(-2,6.5)
    \put(-.5,6.5){\includegraphics{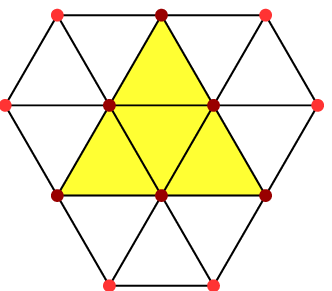}}

  \put(43,91){{\color{blue}$1$}}
   \put(28,67){{\color{blue}$2$}}   \put(58,67){{\color{blue}$2$}} 
    \put(13,41){{\color{blue}$1$}} \put(43,41){{\color{blue}$2$}}\put(73,41){{\color{blue}$1$}}
  \end{picture}
\]
   
\end{example}

\subsection{Numerical irreducible decomposition for multiprojective varieties}\label{S:I.7}
An algorithm for computing witness set collections was given in~\cite{HR15}.
There, Example~20 showed that the trace test cannot be applied to a witness set collection for $X\subset\PP^{\ndot}$.
We must embed $X$ into an affine or projective space and transform the witness set collection into a witness set for 
the embedded $X$, and then apply the trace test.

This poses several problems.
Under the Segre embedding, $X\subset\PP^{\ndot}$ becomes a subvariety $\sigma(X)$ of $\PP^N$, where
$N+1=(n_1+1)\dotsb(n_k+1)$.
Following~\cite[Exer.~19.2]{Harris}, if $X$ has dimension $d$, then $\sigma(X)$ has degree
 \begin{equation}\label{Eq:Segre-degree}
  \sum_{|\bfe|=d} \binom{d}{\bfe} \Deg_X(\bfe)\,,
 \end{equation}
where $\binom{d}{\bfe}$ is the multinomial coefficient $\frac{d!}{e_1!\dotsb e_k!}$.
Thus, both the ambient dimension and size of a witness set increases dramatically.

Replacing $X$ by its intersection with an affine patch $X_{\aff}\subset\CC^{n_1+\dotsb+n_k}$, does not increase its ambient
dimension. 
Unlike the Segre embedding, it is not clear how to efficiently transform a witness collection for $X$ into a witness set
for $X_{\aff}$.
The Richardson variety $Y$ of Example~\ref{Ex:Richardson} has degree 450 under the Segre map and degree eight as an affine
variety.


\section{Computing dimension and completing a partial witness set}\label{sec:ldt}

Suppose that $X\subset\CC^n$ is an irreducible affine variety that is a component of $\calV(F)$, for a collection
$F=(f_1,\dotsc,f_m)$ of polynomials.
We assume that $\calV(F)$ is reduced along $X$ in that there is a point $x\in X$ such that the differential
$\defcolor{d_xF}:=(d_xf_1,\dotsc,d_xf_m)$ (a linear map $\CC^n\to\CC^m$) has rank $n-\dim(X)$.
Then $X$ is smooth at $x$ with tangent space \defcolor{$T_xX$} the kernel of $d_xF$.
The smooth points of $X$ form a nonempty Zariski open subset.

The differential $d_xF$ at a general smooth point $x\in X$ is
given by the Jacobian matrix of $F$,
\[
  \defcolor{DF}\ :=\ \left(\partial f_i/\partial x_j\right)_{i=1,\dotsc,m}^{j=1,\dotsc,n}\ ,
\]
evaluated at $x$.
Thus $\dim(X)=n-\rank(DF(x))$.

\subsection{Dimension of an irreducible multiprojective variety}
Let $X$ be an irreducible subvariety of $\PP^{\ndot}$ of intrinsic dimension $e$.
Its dimension $\Dim(X)$ is a subset of
 \begin{equation}\label{Eq:Dims}
   \{\bfe\in[\ndot] \mid |\bfe| = e\}\,.
 \end{equation}
Castillo et al.~\cite{CLZ} characterized $\Dim(X)$ as follows.
For $\defcolor{I}=\{i_1,\dotsc,i_s\}\subset\{1,\dotsc,k\}$, define 
$\defcolor{\PP^{n_I}}:=\PP^{n_{i_1}}\times\dotsb\times\PP^{n_{i_s}}$, and let
$\defcolor{\pi_I}\colon\PP^{\ndot}\twoheadrightarrow\PP^{n_I}$ be the projection onto the factors indexed by $I$.
Let \defcolor{$\dim_I(X)$} be the intrinsic dimension of $\pi_I(X)\subset\PP^{n_I}$.
Dimension counting implies that if $\bfe\in\Dim(X)$, then
 \begin{equation}\label{Eq:CLZ}
    e_{i_1}+\dotsb +e_{i_s}\ \leq\ \dim_I(X)\,.
 \end{equation}
This follows because if $\ell(\bfx_i)$ is a linear polynomial in the variable group $\bfx_i$, then
$\calV(\ell(\bfx_i))$ is $\pi_{\{i\}}^{-1}(\calV(\ell(\bfx_i)))$, with the second variety  $\calV(\ell(\bfx_i))$ a hyperplane in
$\PP^{n_i}$.

\begin{proposition}[Thm.~1.1 in~\cite{CLZ}]\label{P:CLZ}
  Suppose that $X\subset\PP^{\ndot}$ is an irreducible multiprojective variety.
  Then $\bfe\in[\ndot]$ lies in $\Dim(X)$ if and only if $|\bfe|=\dim(X)$ and for all proper subsets~$I$ of
  $\{1,\dotsc,k\}$, the inequality~\eqref{Eq:CLZ} holds.
\end{proposition}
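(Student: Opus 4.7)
The plan is to handle the two implications separately: the \emph{only if} direction is a direct dimension-counting argument, while the \emph{if} direction is the substantive content and proceeds by induction on $\dim(X)$.

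For the forward direction, suppose $\bfe \in \Dim(X)$, so that $X \cap \calV(\bL^{\bfe})$ is finite and nonempty. For a proper subset $I \subset \{1,\dotsc,k\}$, the linear forms among $\bL^{\bfe}$ that lie in the variable groups $\bfx_i$ for $i \in I$ are pulled back via $\pi_I$ from linear forms on $\PP^{n_I}$. Hence the image $\pi_I(X \cap \calV(\bL^\bfe))$ lies in the intersection of $\pi_I(X)$ with a general codimension-$\sum_{i \in I} e_i$ linear section of $\PP^{n_I}$. This image is nonempty, and a generic linear section of codimension exceeding $\dim \pi_I(X) = \dim_I(X)$ would be empty, giving \eqref{Eq:CLZ}. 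That $|\bfe| = \dim(X)$ is immediate from Bertini applied to the $\bfe$-witness set.

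For the reverse direction, induct on $d := \dim(X)$. The case $d = 0$ is trivial, since then $X$ is a point and only $\bfe = \mathbf{0}$ is admissible. For $d \geq 1$, the relation $|\bfe| = d$ forces some $e_i \geq 1$; the hypothesis for $I = \{i\}$ gives $\dim_{\{i\}}(X) \geq 1$, so $\pi_i(X)$ is positive-dimensional. Let $\ell \in \CC[\bfx_i]_1$ be a general linear form and set $X' := X \cap \calV(\ell)$, which is equidimensional of dimension $d-1$. The goal is to exhibit an irreducible component $Y$ of $X'$ for which the conditions of the proposition hold with the multi-index $\bfe' := \bfe - \mathbf{1}_i$; the inductive hypothesis then yields $\bfe' \in \Dim(Y)$, whence $Y \cap \calV(\bL^{\bfe'})$, and therefore $X \cap \calV(\bL^{\bfe})$, is finite and nonempty.

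The main obstacle is producing such a $Y$, which requires controlling $\dim_J(Y)$ simultaneously for every proper $J \subset \{1,\dotsc,k\}$. When $i \in J$, one checks that $\pi_J(X') = \pi_J(X) \cap \calV(\ell)$ has dimension $\dim_J(X) - 1$, so at least one component attains $\dim_J(Y) = \dim_J(X) - 1 \geq \sum_{j \in J} e'_j$ by the hypothesis on $J$. When $i \notin J$, compare $\dim_{J \cup \{i\}}(X)$ with $\dim_J(X)$: if they differ, the generic fiber of $\pi_{J \cup \{i\}}|_X$ has positive-dimensional image in $\PP^{n_i}$, hence meets $\calV(\ell)$, forcing $\pi_J(X') = \pi_J(X)$ and $\dim_J(Y) = \dim_J(X)$ for \emph{every} component $Y$; otherwise the hypothesis applied to $J \cup \{i\}$ (or, if $J \cup \{i\} = \{1,\dotsc,k\}$, the relation $|\bfe| = d$), together with $e_i \geq 1$, supplies the slack $\sum_{j \in J} e_j \leq \dim_J(X) - 1$, which any $Y$ meets automatically. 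Since there are finitely many proper $J$ and finitely many components of $X'$, a sufficiently generic $\ell$ produces a single component $Y$ satisfying all conditions simultaneously, closing the induction.
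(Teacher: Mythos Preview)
The paper does not prove this proposition; it records the result as Theorem~1.1 of~\cite{CLZ} and only sketches the forward direction in the paragraph preceding the statement. Your forward argument is correct and matches that sketch.

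Your reverse direction has a real gap in its closing step. When $\dim\pi_{\{i\}}(X)=1$ the slice $X'=X\cap\calV(\ell)$ is typically reducible, and your case analysis treats each proper $J$ in isolation: for $J\ni i$ you show that \emph{at least one} component $Y$ satisfies the needed bound, while for other $J$ you claim either that every component works or that there is slack. Nothing ties these together to produce a \emph{single} $Y$ that works for all $J$ simultaneously, and the sentence ``a sufficiently generic $\ell$ produces a single component $Y$ satisfying all conditions'' is not an argument---the components of $X'$ themselves depend on $\ell$, so there is no fixed finite bad locus for genericity to avoid. Separately, in the subcase $i\notin J$ with $\dim_{J\cup\{i\}}(X)>\dim_J(X)$, you assert $\dim_J(Y)=\dim_J(X)$ for \emph{every} component $Y$, but you have only shown $\overline{\pi_J(X')}=\overline{\pi_J(X)}$, which constrains the union of the $\pi_J(Y)$, not each one individually. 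And in the subcase where there is ``slack,'' you still need $\dim_J(Y)\geq\dim_J(X)-1$ for the chosen $Y$, which you have not established.

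The inductive approach can be repaired. If some index with $e_i\geq 1$ has $\dim\pi_{\{i\}}(X)\geq 2$, slice there: Bertini (as in Lemma~\ref{lem:Fslice}(3)) keeps $X'$ irreducible and the difficulty disappears. Otherwise the rank inequality forces $e_i\in\{0,1\}$ for all $i$, and one must argue directly---e.g.\ via the Stein factorization of $\pi_{\{i\}}$, which exhibits the components of $X'$ as generic fibers of an irreducible family over a curve, all sharing the same values of $\dim_J$. Alternatively, the argument in~\cite{CLZ} exploits that $I\mapsto\dim_I(X)$ is a submodular rank function, so the inequalities cut out a polymatroid base polytope (as the paper remarks just after the proposition), and the conclusion is then a standard combinatorial fact about integer points in such polytopes.
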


These inequalities in $\RR^k$ define a lattice polytope of dimension at most $k{-}1$, which is a polymatroid polytope
(called a generalized permutahedron in~\cite{Postnikov}).

\begin{example}\label{Ex:furtherDecomposition}
 We continue Example~\ref{Ex:Richardson}.
 Suppose that in addition to the four minors defining the reducible complete intersection
 $X\subset \PP^3\times\PP^3\times\PP^3$, defining polynomials $F$ include the quadrics
\[  
   y_{1,1}y_{2,2}-y_{1,2}y_{2,1}\quad \mbox{ and }\quad
   y_{1,1}y_{2,3}-y_{1,3}y_{2,1}\,.
\]  
 Then $\calV(F)$ has intrinsic dimension three with twelve irreducible components---the four components of $X$ giving rise to  2, 3,
 3, and 4 irreducible components, respectively.
 The $i$th row of Figure~\ref{F:splittingApart} displays the dimension and multidegree of the irreducible decomposition of
 $Y\cap\calV(y_{1,1}y_{2,2}-y_{1,2}y_{2,1},y_{1,1}y_{2,3}-y_{1,3}y_{2,1})$, where $Y$ is the $i$th component of $X$ from
 Example~\ref{Ex:Richardson}.  
\begin{figure}[htb]
\centering
 \begin{picture}(85,74)(-9,0)
    \put(-1.5,9.5){\includegraphics{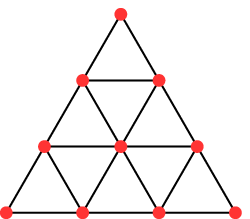}}
             \put(25,72){\small$030$} 
    \put( 3,47){\small$021$}\put( 47,47){\small$120$}
    \put(-9,27){\small$012$}                         \put( 59,27){\small$210$}
    \put(-9, 0){\small$003$}\put( 14, 0){\small$102$}\put( 36, 0){\small$201$}\put(58, 0){\small$300$}
   \put(23,33){{\color{white}\rule{20pt}{10pt}}}  \put(25,34.5){\small$111$}
  \end{picture}
  \qquad
  \begin{picture}(78,74)(6,-13.5)
    \put(7.6,-4.5){\includegraphics{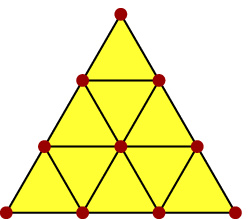}}
    \put(39.7,58){\small{\color{blue}$1$}} 
    \put(28,40.5){\small{\color{blue}$3$}}\put(52,40.5){\small{\color{blue}$2$}}
    \put(17,21.5){\small{\color{blue}$3$}}\put(40,22.5){\small{\color{blue}$5$}}\put( 63,21.5){\small{\color{blue}$2$}}
    \put( 6,2.5){\small{\color{blue}$1$}}\put(29,3.5){\small{\color{blue}$3$}}
    \put(51,3.5){\small{\color{blue}$3$}}\put(74,2.5){\small{\color{blue}$1$}}
  \end{picture}
  \qquad
  \begin{picture}(70,74)(6,-13.5)
    \put(7.6,-4.5){\includegraphics{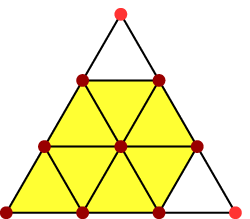}}
    \put(28,40.5){\small{\color{blue}$2$}}\put(52,40.5){\small{\color{blue}$1$}}
    \put(17,21.5){\small{\color{blue}$2$}}\put(40,22.5){\small{\color{blue}$3$}}\put( 63,21.5){\small{\color{blue}$1$}}
    \put( 6,2.5){\small{\color{blue}$1$}}\put(29,3.5){\small{\color{blue}$2$}}
    \put(51,3.5){\small{\color{blue}$2$}}
  \end{picture}\vspace{-1pt}

  \begin{picture}(78,74)(0,-10.5)
    \put(7.6,-4.5){\includegraphics{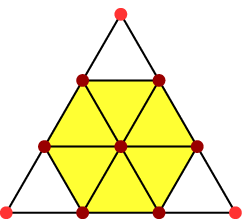}}
    \put(28,40.5){\small{\color{blue}$1$}}\put(52,40.5){\small{\color{blue}$1$}}
    \put(17,21.5){\small{\color{blue}$2$}}\put(40,22.5){\small{\color{blue}$3$}}\put( 63,21.5){\small{\color{blue}$1$}}
    \put(29,3.5){\small{\color{blue}$2$}}\put(51,3.5){\small{\color{blue}$1$}}
  \end{picture}
  \qquad
  \begin{picture}(78,74)(0,-10.5)
    \put(7.6,-4.5){\includegraphics{figures/DR3}}
    \put(28,40.5){\small{\color{blue}$1$}}\put(52,40.5){\small{\color{blue}$1$}}
    \put(17,21.5){\small{\color{blue}$1$}}\put(40,22.5){\small{\color{blue}$2$}}\put( 63,21.5){\small{\color{blue}$1$}}
    \put(29,3.5){\small{\color{blue}$1$}}\put(51,3.5){\small{\color{blue}$1$}}
  \end{picture}
  \qquad
  \begin{picture}(70,74)(0,-10.5)
    \put(7.6,-4.5){\includegraphics{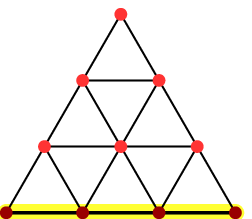}}
    \put( 6,2.5){\small{\color{blue}$1$}}\put(29,3.5){\small{\color{blue}$2$}}
    \put(51,3.5){\small{\color{blue}$2$}}\put(74,2.5){\small{\color{blue}$1$}}
  \end{picture}\vspace{-4pt}

  \begin{picture}(78,74)(0,-10.5)
    \put(7.6,-4.5){\includegraphics{figures/DR3}}
    \put(28,40.5){\small{\color{blue}$1$}}\put(52,40.5){\small{\color{blue}$1$}}
    \put(17,21.5){\small{\color{blue}$2$}}\put(40,22.5){\small{\color{blue}$3$}}\put( 63,21.5){\small{\color{blue}$1$}}
    \put(29,3.5){\small{\color{blue}$2$}}\put(51,3.5){\small{\color{blue}$1$}}
  \end{picture}
  \qquad
  \begin{picture}(78,74)(0,-10.5)
    \put(7.6,-4.5){\includegraphics{figures/DR3}}
    \put(28,40.5){\small{\color{blue}$1$}}\put(52,40.5){\small{\color{blue}$1$}}
    \put(17,21.5){\small{\color{blue}$1$}}\put(40,22.5){\small{\color{blue}$2$}}\put( 63,21.5){\small{\color{blue}$1$}}
    \put(29,3.5){\small{\color{blue}$1$}}\put(51,3.5){\small{\color{blue}$1$}}
  \end{picture}
  \qquad
  \begin{picture}(70,74)(0,-10.5)
    \put(7.6,-4.5){\includegraphics{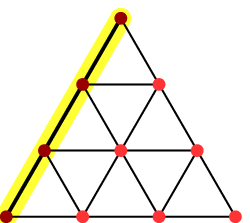}}
    \put(35.5,52.5){\small{\color{blue}$1$}} 
    \put(28,40.5){\small{\color{blue}$2$}}
    \put(17,21.5){\small{\color{blue}$2$}}
    \put( 6,2.5){\small{\color{blue}$1$}}
  \end{picture}\vspace{-4pt}

  \begin{picture}(78,74)(0,-10.5)
    \put(7.6,-4.5){\includegraphics{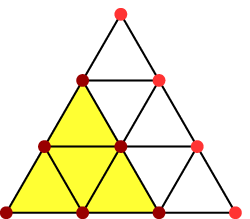}}
    \put(28,40.5){\small{\color{blue}$1$}}
    \put(17,21.5){\small{\color{blue}$2$}}\put(40,22.5){\small{\color{blue}$2$}}
    \put( 6,2.5){\small{\color{blue}$1$}}\put(29,3.5){\small{\color{blue}$2$}}
    \put(51,3.5){\small{\color{blue}$1$}}
  \end{picture}
  \quad
  \begin{picture}(78,74)(0,-10.5)
    \put(7.6,-4.5){\includegraphics{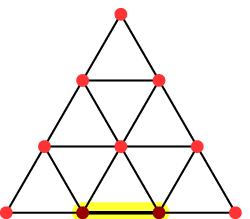}}
    \put(29,3.5){\small{\color{blue}$1$}} \put(51,3.5){\small{\color{blue}$1$}}
  \end{picture}
  \quad
  \begin{picture}(78,74)(0,-10.5)
    \put(7.6,-4.5){\includegraphics{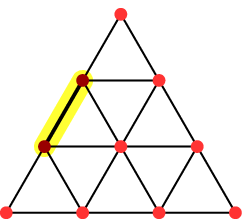}}
    \put(28,40.5){\small{\color{blue}$1$}}
    \put(17,21.5){\small{\color{blue}$1$}}
  \end{picture}
  \quad
  \begin{picture}(78,74)(0,-10.5)
    \put(7.6,-4.5){\includegraphics{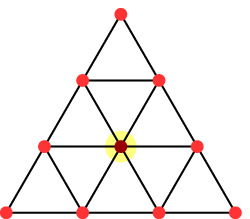}}
    \put(40,22.5){\small{\color{blue}$1$}}
  \end{picture}

  \caption{Decomposition of $X\cap\calV( y_{1,1}y_{2,2}-y_{1,2}y_{2,1},y_{1,1}y_{2,3}-y_{1,3}y_{2,1})$.}
  \label{F:splittingApart}
\end{figure}
 The first row also shows the set $\{\bfe\in[(3,3,3)]\mid |\bfe|=3\}$~from~\eqref{Eq:Dims}.

 As $k=3$, the dimension of a polymatroid polytope $\Dim(Z)$ is at most $2$.
 For seven components this is a polygon, for four, it is a line segment, and for one, it is a point.\hfill$\diamond$ 
\end{example}

Let $x$ be a point on an irreducible multiprojective variety $X\subset\PP^{\ndot}$ and 
suppose that 
\mbox{$I\subset\{1,\dots,k\}$}. 
We assume that $x$ is general in that the map $\pi_I$ is regular at~$x$.
(That is, $x$ is a smooth point of $X$ and the projection map $d_x\pi_I\colon T_xX\to T_{\pi_I(x)}\PP^{n_I}$ has
maximal rank among all smooth points of $X$.)
Then, $\dim_I(X)$ is equal to the dimension of~$d_x\pi_I(T_xX)$.

This leads to a method to compute these dimensions in local coordinates.
Suppose that $F=(f_1,\dotsc,f_m)$ are polynomials in $\CC[\bfy_1,\dotsc,\bfy_k]$
which are the dehomogenization of multihomogeneous polynomials defining $X\subset\PP^{\ndot}$ in some multiaffine patch
$\CC^\ndot\subset \PP^{\ndot}$ containing $x$.
Suppose that $Y$ is the component of $\calV(F)$ containing $x$ and $Y$ is smooth at $x$.
Then the intrinsic dimension $\dim(Y)$ of $Y$ (the local dimension of $\calV(F)$ at $x$) is the dimension of the tangent
space $T_xY$, which is the kernel of the Jacobian $DF(x)$ of $F$ at~$x$.
Thus, $\dim(Y)=\dim_x(Y)=\dim\ker DF(x).$ 

The variable groups $\bfy_1,\dotsc,\bfy_k$  partition the columns of the Jacobian matrix
\[
   DF\ =\ \left(\;\frac{\partial F}{\partial\bfy_1}\;\right|\;
          \left.\frac{\partial F}{\partial\bfy_2}\;\right|\ \dotsb\ 
          \left|\;\frac{\partial F}{\partial\bfy_k}\;\right)\ ,
\]
where for each $l\in\{1,\dotsc,k\}$,
$\defcolor{\partial F/\partial\bfy_l}= (\partial f_i/\partial y_{l,j})_{i=1,\dotsc,m}^{j=1,\dotsc,n_l}$ is the Jacobian matrix
with respect to the variables $\bfy_l$. 
%
%
Denote by $DF_{ I^c}$ 
 the submatrix of $DF$ obtained by omitting the blocks 
${\partial F}/{\partial\bfy_i}$ for $i\in I$. 
In other words,
 \begin{equation}\label{Eq:partialJacobian}
   \defcolor{DF_{  I^c }}\ 
   :=\ \left(\;\frac{\partial F}{\partial\bfy_{j_1}}\;\right|
        \ \dotsb\ 
         \left|\;\frac{\partial F}{\partial\bfy_{j_r}}\;\right),
         \quad \text{where }\quad
	 I^c =\{j_1,\dots,j_r\}.  
 \end{equation}
Since the intrinsic dimension of the image of $Y$ under $\pi_I$ equals the intrinsic dimension of $Y$ minus the  intrinsic
dimension of the fiber over a general point, it follows that 
$\dim_I(Y)=\dim \ker DF(x)-\dim \ker DF_{  I^c}(x)$.

By Proposition~\ref{P:CLZ}, if $Y$ is the irreducible component of a multiprojective variety
$\calV(F)\subset\PP^{\ndot}$ containing the point $x$, then $\Dim(Y)$ is determined by the numbers $\dim(Y)$ and $\dim_I(Y)$
for all proper subsets $I$ of $\{1,\dotsc,k\}$.
Let \defcolor{$\Dim_x(F)$} be these numbers, which may be computed in local coordinates by determining the ranks of the
Jacobian matrices $DF(x)$ and 
$DF_{ I^c }(x)$.
This leads to two algorithms to classify the dimension of components of $\calV(F)$ given points of $\calV(F)$.

\begin{alg}[Dimension at a smooth point]
  \label{A:local-dimension}
  \mbox{\ }\newline
  {\bf Input:} A general smooth point $x\in\calV(F)\subset\PP^{\ndot}$.\newline
  {\bf Output:} $\Dim_x(F)$.  \newline
  {\bf Do:} Dehomogenize $F$ and compute $\dim\ker DF(x)$.
   For each  proper subset $I$ of
  $\{1,\dotsc,k\}$ compute  $\dim\ker DF_{ I^c }(x)$ 
  to determine the difference 
   $\dim\ker DF(x) - \dim\ker DF_{  I^c }(x)$.
\end{alg}
\newcommand{\aDim}{\Delta} 

If $y$ is smooth but not general, then it can be perturbed via a homotopy to a general point.
(Recall that witness points are smooth.)
Given \mbox{$F\subset\CC[\bfy_1,\dotsc,\bfy_k]$} defining a multiaffine variety
$\calV(F)\subset\CC^{\ndot}$, this algorithm simply skips the dehomogenization.

A multiprojective variety $X$ is \demph{equidimensional} if all irreducible components have the same multidimension.
A multiprojective variety has a unique decomposition into equidimensional pieces.
Given a collection $W$ of general smooth points of $\calV(F)$, by computing the local dimension via Algorithm~\ref{A:local-dimension} one can sort the points by the equidimensional component of $\calV(F)$ on which they lie.
Let \defcolor{$\Dim(W)$} be the set of dimensions of components of $\calV(F)$ containing points of $W$.
For $\aDim\in\Dim(W)$, define $\defcolor{W_\aDim}:=\{w\in W\mid\Dim_w(F)=\aDim\}$.
These sets partition $W$ and form the \demph{equidimensional decomposition} of $W$,
\[
    W\ =\ \bigsqcup \{ W_\aDim \mid \aDim\in\Dim(W)\}\,.
\]

\begin{alg}[Equidimensional decomposition]\label{alg:equi}
  \mbox{\ }\newline
  {\bf Input:} A finite set $W\subset \calV(F)$ of general smooth points.\newline
  {\bf Output:} $\Dim(W)$ and the equidimensional partition of $W$. \newline
  {\bf Do:} For each $w\in W$, compute the local dimension $\Dim_w(F)$ of $\calV(F)$ at $w$ to get $\Dim(W)$ and for each
  $\aDim\in\Dim(W)$ let $W_\aDim=\{w\in W\mid\Dim_w(F)=\aDim\}$.
\end{alg}

It is important that the points of $W$ be general so that the maps $\pi_I$ are regular on $W$.

\subsection{Completing a partial witness collection}
A partial witness collection $(F,\bL^\bfe,W_\bfe)$ for a multiprojective variety $Y$ may be completed to a witness
collection using monodromy.
While this was sketched in Subsection~\ref{SS:partial}, it needs the definitions given in this section.

\begin{alg}[Completing a witness collection from a single point]\label{alg:completion}
  \mbox{\ }\newline
  {\bf Input:} A general smooth point $y$ on an irreducible multiprojective variety $Y$ that is a component of
  $\calV(F)$.\newline 
  {\bf Output:} A witness collection for $Y$. \newline
  {\bf Do:} Use Algorithm~\ref{A:local-dimension} to compute $\Dim(Y)$.
  Choose linear polynomials $\ell_{ij}\in\CC[\bfx_i]$ for $i\in\{1,\dots,k \}$ and $j=1,\dotsc,n_i$ that are general given that they vanish
  at $y$. 
  Using the $\ell_{ij}$ gives a partial witness collection
  $\{(F,\bL^\bfe,\{y\})\mid \bfe\in\Dim(Y)\}$ for $Y$.
  Use monodromy as in Subsection~\ref{SS:partial} to complete each partial $\bfe$-witness point set $\{y\}$ to the
  complete $\bfe$-witness point set $Y\cap\calV(\bL^\bfe)$.
\end{alg}

\begin{proof}[Proof of correctness]
  We note that this does not have a stopping criterion, and is therefore technically not an algorithm.
  Nevertheless, by the choice of $\ell_{ij}$, each intersection  $Y\cap\calV(\bL^\bfe)$ is transverse and contains $\{y\}$.
  Thus, letting the $\bL^\bfe$ vary in a loop gives a homotopy.
  The rest follows from the discussion in Subsection~\ref{SS:partial}.
\end{proof}

\section{Cartesian products}\label{sec:product}

Of the twelve irreducible components $Y$ of the variety $\calV(F)$ of Example~\ref{Ex:furtherDecomposition}, $\Dim(Y)$ was a
line segment for four and a point for one.
In these five cases, $\Dim(Y)$ was decomposable as a product of polymatroid polytopes.
We will show that if $Y\subset\PP^{\ndot}$ is an irreducible multiprojective variety for which $\Dim(Y)$ is such a product,
then $Y=Y'\times Y''$ is a Cartesian product of irreducible varieties in disjoint factors of $\PP^{\ndot}$, and the witness
sets for $Y$ are also products of witness sets for $Y'$ and $Y''$.

Fix $1\leq l< k$.  Let $\defcolor{\npdot}:=(n_1,\dotsc,n_l)$ and
$\defcolor{\nppdot}:=(n_{l+1},\dotsc,n_k)$  so that
\mbox{$\PP^{\ndot}=\PP^{\npdot}\times\PP^{\nppdot}$}.
If $Y'\subset\PP^{\npdot}$ and $Y''\subset\PP^{\nppdot}$ are irreducible varieties, then so is
\mbox{$Y'\times Y''\subset \PP^{\ndot}$}.
Its intrinsic dimension is the sum of the intrinsic dimensions of 
its factors, $\dim(Y'\times Y'')=\dim(Y')+\dim(Y'')$. 
Its multidimension has a similar decomposition, 
 \begin{eqnarray*}
   \Dim(Y'\times Y'')&=& \Dim(Y') \times \Dim(Y'')\\
   &=& \{ \bfe\in[\ndot] \mid \bfe=(\bfe',\bfe'') \mbox{ for }\bfe'\in\Dim(Y') \mbox{ and }\bfe''\in\Dim(Y'')\}\,,
 \end{eqnarray*}
as $[\ndot]= [\npdot]\times [\nppdot]$.
This is a consequence of the definition given in Subsection~\ref{SS:Multiproj} for the multidimension
of a multiprojective variety, applied to such a product.

For $(\bfe',\bfe'')\in\Dim(Y'\times Y'')$, suppose that $\bL^{\bfe'}\subset\CC[\bfx_1,\dotsc,\bfx_l]$ and
$\bL^{\bfe''}\subset\CC[\bfx_{l+1},\dotsc,\bfx_k]$ are general linear polynomials with corresponding
witness point sets $W_{\bfe'}=Y'\cap\calV(\bL^{\bfe'})$ for $Y'$ and  $W_{\bfe''}=Y''\cap\calV(\bL^{\bfe''})$ for $Y''$.
Then
\[
  W_{\bfe'}\times W_{\bfe''}\ =\ \left(Y'\times Y''\right)\cap \calV(\bL^{\bfe'},\bL^{\bfe''})
\]
is an $(\bfe',\bfe'')$-witness point set for the product $Y'\times Y''$.

More generally, let $I\subset\{1,\dotsc,k\}$ be a proper subset with complement $J$ so that
$\PP^\ndot= \PP^{n_I}\times\PP^{n_J}$.
Given irreducible multiprojective varieties $Y\subset\PP^{n_I}$ and $Z\subset\PP^{n_J}$, their product is a multiprojective variety
$Y\times Z\subset\PP^\ndot$.
We similarly have $\Dim(Y\times Z)=\Dim(Y)\times\Dim(Z)$, and witness 
point sets for $Y\times Z$ are products of
witness point sets for $Y$ and for $Z$.
This reduces to the previous discussion after reordering the factors of $\PP^\ndot$.

\begin{thm}\label{thm:Product}
 An irreducible multiprojective variety $X\subset\PP^\ndot$ is a Cartesian product $X=Y\times Z$ of multiprojective
 varieties $Y\subset\PP^{n_I}$ and $Z\subset\PP^{n_J}$ in disjoint factors of\/ $\PP^\ndot$ if and only if $\Dim(X)$ is
 the product of polymatroid polytopes $P\subset[n_I]$ and $Q\subset[n_J]$ with $\Dim(Y)=P$ and $\Dim(Z)=Q$.

 When this occurs, the multidegree $\Deg_X(\bfe',\bfe'')$ for $\bfe'\in\Dim(Y)$ and $\bfe''\in\Dim(Z)$ is the
 product $\Deg_Y(\bfe')\cdot\Deg_Z(\bfe'')$ of multidegrees and any $(\bfe',\bfe'')$-witness point set for $X$ is
 the product of corresponding witness point sets for $Y$ and for $Z$.
\end{thm}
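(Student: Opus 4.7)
The forward direction was dispatched in the discussion immediately preceding the theorem. For the converse, suppose $\Dim(X)=P\times Q$ and set $Y:=\pi_I(X)\subset\PP^{n_I}$ and $Z:=\pi_J(X)\subset\PP^{n_J}$. Both $Y$ and $Z$ are irreducible as images of the irreducible $X$, and $X\subseteq Y\times Z$. The plan is to show these two irreducible varieties have the same intrinsic dimension, hence coincide, identifying $\Dim(Y)=P$ and $\Dim(Z)=Q$ along the way.

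First, $|\bfe'|$ is constant on $P$ (call the value $p$) and $|\bfe''|$ is constant on $Q$ (call it $q$), with $p+q=\dim(X)$. Indeed, for any $\bfe_1',\bfe_2'\in P$ and any $\bfe''\in Q$, both $(\bfe_1',\bfe'')$ and $(\bfe_2',\bfe'')$ lie in $P\times Q=\Dim(X)$ with $|\cdot|=\dim(X)$, forcing $|\bfe_1'|=|\bfe_2'|$; the symmetric argument handles $Q$.

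The heart of the proof is to identify $P=\Dim(Y)$ (symmetrically $Q=\Dim(Z)$). I first argue $\Dim(Y)\subseteq P$, which will force $p=\dim(Y)$. Fix $\tilde\bfe'\in\Dim(Y)$ and slice $Y$ with a general $\bL^{\tilde\bfe'}$, producing a nonempty finite set; let $C$ be an irreducible component of the pullback $X\cap\calV(\bL^{\tilde\bfe'})$, which has dimension $\dim(X)-\dim(Y)$ by the fiber dimension theorem applied to $\pi_I\colon X\to Y$. Being irreducible, $C$ maps to a single point $y_0\in Y$ under $\pi_I$, so $C\subset\{y_0\}\times\PP^{n_J}$, and the restriction of $\pi_J$ to $\{y_0\}\times\PP^{n_J}$ is an isomorphism; hence $\pi_J$ carries $C$ isomorphically onto $\pi_J(C)\subset\PP^{n_J}$. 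Choose $\tilde\bfe''\in\Dim(\pi_J(C))$, so $|\tilde\bfe''|=\dim(C)$ and $|(\tilde\bfe',\tilde\bfe'')|=\dim(X)$. Slicing $\pi_J(C)$ with $\bL^{\tilde\bfe''}$ yields a nonempty finite set, which pulls back via the isomorphism to a nonempty finite subset of $X\cap\calV(\bL^{(\tilde\bfe',\tilde\bfe'')})$. For general choice of the ambient $\ell_{ij}$, a slice of $X$ by $\dim(X)$ hyperplanes is at most zero-dimensional by dimension counting, so this full intersection is finite; thus $(\tilde\bfe',\tilde\bfe'')\in\Dim(X)=P\times Q$ and $\tilde\bfe'\in P$. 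In particular $p=|\tilde\bfe'|=\dim(Y)$, and symmetrically $q=\dim(Z)$. For the reverse inclusion $P\subseteq\Dim(Y)$, note that $\pi_K$ factors through $\pi_I$ whenever $K\subseteq I$, so $\pi_K(X)=\pi_K(Y)$ and $\dim_K(X)=\dim_K(Y)$; the CLZ inequalities~\eqref{Eq:CLZ} satisfied by $(\bfe',\bfe'')\in\Dim(X)$ for subsets $K\subseteq I$ are therefore exactly the CLZ inequalities for $Y$ applied to $\bfe'$, and combined with $|\bfe'|=p=\dim(Y)$, Proposition~\ref{P:CLZ} yields $\bfe'\in\Dim(Y)$.

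Finally, $\dim(X)=p+q=\dim(Y)+\dim(Z)=\dim(Y\times Z)$, and since $X\subseteq Y\times Z$ with both irreducible, $X=Y\times Z$. The multidegree and witness point set assertions now follow from the pre-statement computation: for any $(\bfe',\bfe'')\in\Dim(X)$, a general linear section of $Y\times Z$ factors as $(Y\cap\calV(\bL^{\bfe'}))\times(Z\cap\calV(\bL^{\bfe''}))$, yielding $\Deg_X(\bfe',\bfe'')=\Deg_Y(\bfe')\cdot\Deg_Z(\bfe'')$ and witness point sets that are Cartesian products. The main technical obstacle is the lifting step above: the key geometric observation is that an irreducible component of a $\pi_I$-fiber sits inside a single $\{y\}\times\PP^{n_J}$ and so is carried isomorphically by $\pi_J$, which is what lets us realize any $\tilde\bfe'\in\Dim(Y)$ as the $I$-part of some element of $\Dim(X)$.
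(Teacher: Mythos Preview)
Your proof is correct and follows essentially the same route as the paper: define $Y=\pi_I(X)$ and $Z=\pi_J(X)$, show that any $\tilde\bfe'\in\Dim(Y)$ can be completed to $(\tilde\bfe',\tilde\bfe'')\in\Dim(X)=P\times Q$ by slicing the fiber in the $J$-variables, conclude $\dim(Y)=p$ and $\dim(Z)=q$, and then use $X\subseteq Y\times Z$ with matching dimensions. Your treatment is somewhat more explicit (isolating an irreducible component $C$ of the fiber and using the isomorphism $\pi_J|_{\{y_0\}\times\PP^{n_J}}$), whereas the paper simply notes that $X\cap\calV(\bL^{\bfe'})$ is a union of general fibers and picks $\bfe''$ accordingly. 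One remark: your separate argument for $P\subseteq\Dim(Y)$ via the CLZ inequalities is unnecessary---once $X=Y\times Z$ is established, the forward direction gives $\Dim(X)=\Dim(Y)\times\Dim(Z)$, and equating this with $P\times Q$ forces $P=\Dim(Y)$ and $Q=\Dim(Z)$ by projection.
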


\begin{proof}
 The forward direction of the first part is a consequence of the preceding discussion, as is the second part of the
 theorem (which follows from the cartesian product $X=Y\times Z$).
 For the reverse direction of the first part,  suppose that $\Dim(X)=P\times Q$, where $P\subset[n_I]$ and $Q\subset[n_J]$
 are polymatroid polytopes in disjoint factors of $[\ndot]$, so that $I\sqcup J=\{1,\dotsc,k\}$.
 Since $P$ and $Q$ are polymatroid polytopes, there are integers $p$ and $q$ such that if $\bfe'\in P$ and $\bfe''\in Q$,
 then $|\bfe'|=p$,  $|\bfe''|=q$, and $p+q=\dim(X)$.
 
 Let us study the map $\pi_I\colon X\to \PP^{n_I}$ whose image $\pi_I(X)$ has dimension $\dim_I(X)$.
 Let $\bfe'\in\Dim(\pi_I(X))\subset[n_I]$ and let $\bL^{\bfe'}\subset\CC[\bfx_i\mid i\in I]$ be $|\bfe'|=\dim_I(X)$ general
 linear polynomials so that $\pi_I(X)\cap\calV(\bL^{\bfe'})$ consists of $d=\Deg_{\pi_I(X)}(\bfe')$ points.
 Since in $\PP^{\ndot}$, 
we have 
 $\calV(\bL^{\bfe'})=\pi_I^{-1}(\calV(\bL^{\bfe'}))$, the intersection $X\cap\calV(\bL^{\bfe'})$ is
 nonempty and it consists of~$d$ fibers of the map $\pi_I\colon X\to \pi_I(X)$.
 By the generality of $\bL^{\bfe'}$, each fiber has dimension $\dim(X)-\dim_I(X)$.
 Then there is some $\bfe''\in[n_J]$ such that if $\bL^{\bfe''}\subset\CC[\bfx_j\mid j\in J]$ are
 $|\bfe''|=\dim(X)-\dim_I(X)$ general linear polynomials, then $X\cap\calV(\bL^{\bfe'})\cap\calV(\bL^{\bfe''})$~is~nonempty.

 This implies that $(\bfe',\bfe'')\in\Dim(X)$ and in particular that $\bfe'\in P$ and $\bfe''\in Q$ and that
 $\dim_I(X)=\dim(\pi_I(X))=p$.
 Similarly,  $\dim_J(X)=\dim(\pi_J(X))=q$.
 Since \mbox{$X\subset \pi_I(X)\times\pi_J(X)$} and both are irreducible of dimension $p+q$, they are equal.
\end{proof}

\begin{example}\label{Ex:product}
 Let us look at the last two components in the bottom row of Figure~\ref{F:splittingApart}.
 The third component $Y$ has $\Dim(Y)=\{0\}\times\{12,21\}$.
 Its ideal is generated by
 \begin{eqnarray*}
    &    \quad  y_{1,1}\,,\ y_{1,2}\,,\ y_{1,3}\,,\ 
      19y_{2,2}+46y_{2,3}\,,\ 19y_{3,2}+46y_{3,3}+34\,,
      &\\ &
    243y_{2,3}y_{3,1}-243y_{2,1}y_{3,3}-306y_{2,1}+1020y_{2,3}-342y_{3,1}+1194y_{3,3}+68\,&
 \end{eqnarray*}
 The first three define the point $\{(0,0,0)\}$ in the first $\CC^3$ factor and the next two define a plane in
 each of the last two factors.
 Thus $Y=\{(0,0,0)\}\times Z$, where $Z\subset\CC^2\times\CC^2$ is the hypersurface defined by the last bilinear
 polynomial. 
 This explains $\Dim(Y)$ and $\Deg_Y$.

 The last component $Y$ has $\Dim(Y)=\{1\}\times\{1\}\times\{1\}$.
 Its ideal is generated by 
 \begin{eqnarray*}
    &
     57y_{1,1}-199y_{1,3}\,,\  
     19y_{1,2}+46y_{1,3}\,,\ 
      57y_{2,1}-199y_{2,3}\,,
   &\\ &
     19y_{2,2}+46y_{2,3}\,,\  
     171y_{3,1}-597y_{3,3}-34\,,\ 
     19y_{3,2}+46y_{3,3}+34\,.&
 \end{eqnarray*}
 As there are two affine forms in each variable group, $Y$ is isomorphic to $\CC\times\CC\times\CC$, which again explains
 its multidegree.\hfill$\diamond$
\end{example}

Membership testing in Cartesian products can be simplified
since one can consider membership in each factor independently.  

\begin{alg}[Membership test in Cartesian product]
  \label{A:membership-product}
  \mbox{\ }\newline
  {\bf Input:} A witness collection for an irreducible
   multiprojective variety $X\subset\PP^{\ndot}$
   which is a Cartesian product $X = Y\times Z$ 
   of multiprojective varieties $Y\subset\PP^{n_I}$ and $Z\subset\PP^{n_J}$
   in disjoint factors of\/ $\PP^{\ndot}$ 
   and a point $\bfx = (\bfy,\bfz)\in\PP^{\ndot}$.
  \newline
  {\bf Output:} A triple $(B_{\bfx},B_{\bfy},B_{\bfz})$ 
of booleans such that $B_\omega$ answers if $\omega \in \Omega$.   
\newline
  {\bf Do:} Select $\bfe\in\Dim(X)$ and fix a point $(\bfy^*,\bfz^*)$
  from the $\bfe$-witness point set for $X$.  Construct witness collections
  for $Y$ and $Z$ from the given witness collection for $X$
  following Theorem~\ref{thm:Product} with polynomial systems
  $F(\bfy,\bfz^*)$ and $F(\bfy^*,\bfz)$, respectively.  
  Apply Algorithm~\ref{A:membership-multi} to $Y$ and $Z$
  yielding $B_{\bfy}$ and $B_{\bfz}$, respectively.  
  Set $B_\bfx = B_\bfy \times B_\bfz$
\end{alg}

\begin{proof}[Proof of correctness]
Since $X = Y\times Z$, we know 
$\bfx\in X$ if and only if $\bfy\in Y$ and $\bfz\in Z$.
Let $\bfe=(\bfe',\bfe'')\in\Dim(X)$ be the selection
that yielded $(\bfy^*,\bfz^*)$ in the $\bfe$-witness point set for $X$
with corresponding $\bL^\bfe = (\bL^{\bfe'},\bL^{\bfe''})$.  
Then, $Y\times\{z^*\}$ and $\{y^*\}\times Z$
are irreducible components of $\calV(F,\bL^{\bfe''})$
and $\calV(F,\bL^{\bfe'})$, respectively.  
Hence, by selecting a representative of $\bfy^*$ and $\bfz^*$, 
it follows that $Y$ and $Z$ are irreducible components of 
$F(\bfy,\bfz^*)$ and $F(\bfy^*,\bfz)$, respectively.
Hence, Algorithm~\ref{A:membership-product} decides
membership of $y$ in $Y$ and $z$ in $Z$ which immediately
decides membership of $x$ in $X$.
\end{proof}

A natural recursion applies when $X$ is a Cartesian product of  more than two varieties.

\section{Refining and coarsening witness collections}\label{sec:coarse}

Algorithms for computing witness sets and witness collections operate on affine patches of projective
and multiprojective varieties.
Changing the multiaffine structure is straightforward in such patches and corresponds to a birational map on
the underlying (multi)pro\-jec\-tive variety.
We describe this and investigate how it affects witness collections.

A multiaffine variety $X_{\aff}\subset\CC^{\ndot}$ is simply a variety in the affine space $\CC^{n_1+\dotsb+n_k}$
whose coordinates have been partitioned into subsets of sizes $n_1,\dotsc,n_k$.
Changing the partition does not change the variety $X_{\aff}$, but it does change its multiaffine structure,
that is, its multidimension and multidegrees.
In particular, repartitioning changes how $X_{\aff}$ is represented using a witness collection.
Any repartitioning is a composition of two operations, \demph{refining}, in which one variable group is
split into two, and \demph{coarsening}, in which two variable groups are merged into one.
We describe the geometry of refining and coarsening, and give algorithms for transforming witness collections for
both. 

\begin{example}\label{ex:cubicPerestroika}
The polynomial $y^2-2xy-x^3+x$ defines a plane cubic curve.
As a multiaffine variety in $\CC^1_x\times\CC^1_y$ its multidimension is $\{ 10,01\}$ with corresponding multidegrees~$2$ for $10$ and $3$ for $01$.
In $\CC^2$, it is represented by a witness set
which uses a linear section such as shown at center below.
In $\CC^1_x\times\CC^1_y$, it is represented by a witness collection, which are its intersections with a vertical and with a 
horizontal line as at right below.\hfill$\diamond$
\[
  \begin{picture}(130,103)(0,-12)
      \put(0,0){\includegraphics{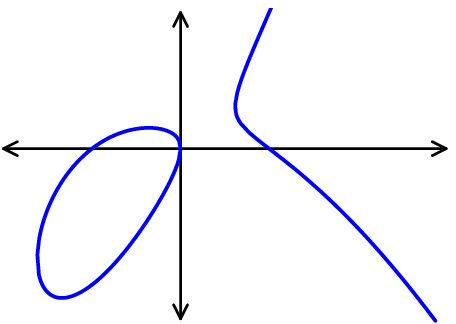}}
      \put(15,-12){$\calV(y^2{-}2xy{-}x^3{+}x)$}
   \end{picture}
      \quad
   \begin{picture}(130,103)(0,-12)
      \put(0,0){\includegraphics{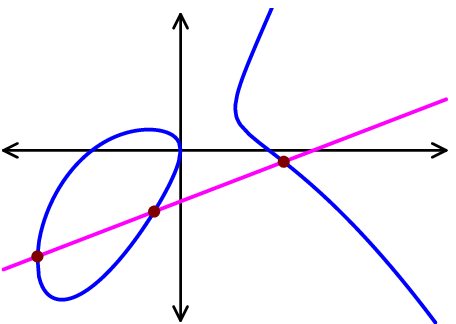}}
      \put(105,61){$L$}
      \put(35,-12){Linear Section}
    \end{picture}
  \quad
   \begin{picture}(130,103)(0,-12)
      \put(0,0){\includegraphics{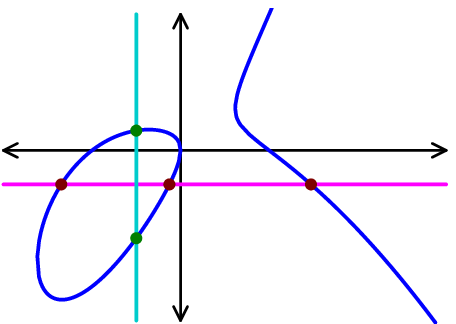}}
      \put(111,28){$\bL^{01}$}
      \put(20,78){$\bL^{10}$}
      \put(18,-12){Witness Collection}
   \end{picture}
\]
\end{example}

\subsection{Refining}
Suppose that $k=2$, so that $\ndot=(n_1,n_2)$ and set $n:=n_1+n_2$.
Let $Y\subset\PP^n$ be an irreducible variety of dimension $e$ and degree $d$.
Let $\Lambda$ be a linear polynomial that does not vanish  identically on $Y$ and set
$\defcolor{Y_{\aff}}:=Y\smallsetminus\calV(\Lambda)$, 
which is an affine variety in the affine patch $\CC^n=\PP^n\smallsetminus\calV(\Lambda)$.
For the splitting $\CC^n=\CC^{n_1}\times\CC^{n_2}$, 
let \defcolor{$Y_{\ndot}$} be the closure of $Y_{\aff}$ in the
compactification $\PP^{n_1}\times\PP^{n_2}$ of $\CC^{n_1}\times\CC^{n_2}$.

\begin{proposition}\label{Prop:perestroika}
  The multiprojective variety $Y_{\ndot}$ and multiaffine variety $Y_{\aff}$ are irreducible and have dimension $e$.
  For any $(e_1,e_2)\in[\ndot]$ with $e=e_1+e_2$, the $(e_1,e_2)$-multidegree of $Y_{\ndot}$ (and also of $Y_{\aff}$) is at
  most the degree of $Y$.
\end{proposition}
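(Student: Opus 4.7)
The plan is to dispatch the irreducibility and dimension claims directly, then bound each multidegree by transferring the intersection from $\PP^{n_1}\times\PP^{n_2}$ to $\PP^n$ via the common affine chart and invoking classical intersection theory. For the first assertion, $Y_{\aff}=Y\setminus\calV(\Lambda)$ is a nonempty Zariski open subset of the irreducible variety $Y$ (nonempty because $\Lambda$ does not vanish on $Y$), so it is irreducible of dimension $e$; taking the closure in the compactification $\PP^{n_1}\times\PP^{n_2}$ of $\CC^{n_1}\times\CC^{n_2}$ preserves both irreducibility and dimension, so $Y_{\ndot}$ is likewise irreducible of dimension $e$.

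For the multidegree bound, I fix $(e_1,e_2)\in[\ndot]$ with $e_1+e_2=e$ and consider a general slice $\bL^{(e_1,e_2)}$ whose multiprojective zero locus is $\Lambda_1\times\Lambda_2\subset\PP^{n_1}\times\PP^{n_2}$. The key observation is that in the common affine chart $\CC^n=\CC^{n_1}\times\CC^{n_2}$, each component of $\bL^{(e_1,e_2)}$ is an affine form that extends to a linear form on $\PP^n$; collectively these extensions define a codimension-$e$ linear subspace $\Lambda\subset\PP^n$ with $\Lambda\cap\CC^n=(\Lambda_1\times\Lambda_2)\cap\CC^n$. A Bertini/dimension count in $\PP^{n_1}\times\PP^{n_2}$ shows that for generic $\bL^{(e_1,e_2)}$, the slice $\Lambda_1\times\Lambda_2$ avoids the boundary locus $Y_{\ndot}\setminus Y_{\aff}$ (which has dimension strictly less than $e$), so every point of $Y_{\ndot}\cap(\Lambda_1\times\Lambda_2)$ already lies in the multiaffine chart and corresponds to a point of $Y\cap\Lambda\subset\PP^n$. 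The corresponding assertion for $Y_{\aff}$ is identical, since its multidegree is computed by the same affine intersections.

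The bound then follows from classical intersection theory: the cycle $Y\cdot\Lambda$ has total degree $\deg(Y)=d$, and each isolated point of $Y\cap\Lambda$ contributes at least $1$ to this total, so the set of isolated points has cardinality at most $d$. This gives $\Deg_{Y_{\ndot}}(e_1,e_2)\leq d$ whenever $(e_1,e_2)\in\Dim(Y_{\ndot})$, and the remaining case is trivial since the multidegree is then $0$. The main obstacle will be the intersection-theoretic step when $Y\cap\Lambda$ is not a proper intersection in $\PP^n$, which can occur because $\Lambda$ is constrained to a linear subfamily of codimension-$e$ subspaces rather than being fully generic. Any positive-dimensional components of $Y\cap\Lambda$ must then lie at infinity $\{t=0\}$ (else the affine intersection would be infinite, contradicting $(e_1,e_2)\in\Dim(Y_{\ndot})$), and I will confirm the bound by perturbing $\Lambda$ to a fully generic codimension-$e$ subspace $\Lambda_\varepsilon\subset\PP^n$ meeting $Y$ transversely in $d$ simple points, and tracking the limits of these points in the affine chart as $\varepsilon\to 0$.
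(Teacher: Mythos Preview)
Your argument is correct and follows essentially the same route as the paper: both establish irreducibility and dimension via the open-subset/closure observations, then note that a general multilinear slice lands entirely in the affine chart $Y_{\aff}$, where it becomes the intersection of $Y_{\aff}$ with a (non-generic) codimension-$e$ affine linear subspace, hence has at most $\deg(Y)$ points.

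The only difference is in the final step. The paper simply invokes the standard fact that any codimension-$e$ linear subspace meeting an $e$-dimensional degree-$d$ variety in finitely many points meets it in at most $d$ points, and stops there. You instead pass to $\PP^n$, worry about possible positive-dimensional components of $Y\cap\Lambda$ at infinity, and sketch a perturbation argument to handle improper intersection. This is a legitimate way to justify that standard fact, but it is more machinery than the proposition requires: since you have already shown the intersection is finite in the affine chart (and equals the multilinear slice you are counting), the bound by $\deg(Y)$ follows directly from the definition of degree together with upper-semicontinuity of intersection number, without needing to analyze what happens on the hyperplane at infinity in $\PP^n$.
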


This agrees with Example~\ref{ex:cubicPerestroika}, where the size of each set in the witness collection was bounded above
by the degree of the plane curve.

\begin{proof}
 As $Y_{\aff}$ is a nonempty open subset of the irreducible variety $Y$, it is irreducible and of the same dimension.
 The same arguments imply that $Y_{\ndot}$ is irreducible of dimension $e$.
 A general multilinear section $\calV(\bL^{(e_1,e_2)})\cap Y_{\ndot}$ will be a subset of $Y_{\aff}$.
 In the affine space $\CC^n=\CC^{n_1}\times\CC^{n_2}$, the variety  $\calV(\bL^{(e_1,e_2)})$ is a (non-general) linear
 subspace of codimension~$e$, and thus $\calV(\bL^{(e_1,e_2)})\cap Y_{\aff}$ consists of at most $\deg(Y)$ points.
\end{proof}

This gives a homotopy algorithm for computing witness collections under a refinement of a
coordinate partition.
Let $Y\subset\CC^n$ be an equidimensional affine variety of dimension~$e$ and degree $d$, given as a union of components of
a variety $\calV(F)$ and let $\CC^n=\CC^{n_1}\times\CC^{n_2}$ be a splitting of $\CC^n$ with
$\bfy=(\bfy_1,\bfy_2)$ the corresponding partition of variables for $\CC^n$.

Suppose that $Y\subset\CC^n$ is represented by a witness set $(F,L^e,Y\cap\calV(L^e))$  where
$L^e\subset\CC[\bfy]$ consists of $e$ general affine forms.
Let $\bL^{(e_1,e_2)}$ be $e$ affine forms with $e_1$ from $\CC[\bfy_1]$ and  $e_2$ from $\CC[\bfy_2]$, but
otherwise general.
Then, $Y\cap\calV(\bL^{(e_1,e_2)})$ is a $(e_1,e_2)$-witness point set
for the multiaffine variety $Y\subset\CC^{n_1}\times\CC^{n_2}$.
The system 
 \begin{equation}\label{Eq:Refining_homotopy}
  \defcolor{H(t)}\ :=\ (F\,,\, t L^e + (1{-}t)\bL^{(e_1,e_2)})
 \end{equation}
is a homotopy that connects the solutions $Y\cap\calV(L^e)$ of the
start system $H(1)$ to solutions $Y\cap\calV(\bL^{(e_1,e_2)})$ of the target system  $H(0)$.

\begin{alg}[Transforming witness sets under refinement]\label{Alg:Refinement}
  \mbox{\ }\newline
  {\bf Input:} A witness set $(F,L^e,Y\cap\calV(L^e))$ for an equidimensional affine variety $Y\subset\CC^n$ of dimension
  $e$, a splitting  $\CC^n=\CC^{n_1}\times\CC^{n_2}$, and integers $0\leq e_1,e_2$ with $e_1+e_2=e$.\newline
  {\bf Output:} An $(e_1,e_2)$-witness point set for the multiaffine variety $Y\subset\CC^{n_1}\times\CC^{n_2}$. \newline
  {\bf Do:} Form the homotopy~\eqref{Eq:Refining_homotopy} and follow the points of $Y\cap\calV(L^e)$ along $H$ from $t=1$
  to $t=0$, keeping those whose paths are bounded near $t=0$.
\end{alg}
  
 Executing Algorithm~\ref{Alg:Refinement} for each $(e_1,e_2)\in\Dim(Y)$ computes the witness point sets for the full
witness collection of the multiaffine variety $Y\subset\CC^{n_1}\times\CC^{n_2}$.
 In Example~\ref{ex:cubicPerestroika} Algorithm~\ref{Alg:Refinement} amounts to rotating the line $L$ in the middle picture
 to either a horizontal or a vertical line.

\begin{proof}[Proof of correctness] 
 Since $L^e$ is general, the intersection $Y\cap\calV(L^e)$ is transverse and consists of $d=\deg(Y)$ points.
 Thus, for general $t$, the intersection \mbox{$Y\cap\calV(t L^e + (1-t)\bL^{(e_1,e_2)})$} is also transverse and consists of
 $d$  points, and so~\eqref{Eq:Refining_homotopy} is a homotopy. 
 As the affine forms in $\bL^{(e_1,e_2)}$ are general given their variables, the intersection 
 $Y\cap\calV(\bL^{(e_1,e_2)})$ is transverse and consists of $\Deg_Y(e_1,e_2)$ points.
 Thus $\Deg_Y(e_1,e_2)$ paths in the homotopy end at the points of $Y\cap\calV(\bL^{(e_1,e_2)})$ and  
 $d-\Deg_Y(e_1,e_2)$ paths diverge as $t$ approaches~0.
\end{proof}

\begin{rem}\label{R:SplittingManyFactors}
 Suppose that $Y\subset\CC^{n_1}\times\dotsb\times\CC^{n_k}$ is a multiaffine variety with $k>1$ and that 
 $\CC^{n_i}=\CC^{n'_i}\times\CC^{n''_i}$ is a refinement splitting the $i$th factor $\CC^{n_i}$.
 We may use the ideas in Algorithm~\ref{Alg:Refinement} to transform an 
 $\bfe$-witness point set for $Y$ into one
 for this refinement.
 Given an $\bfe$-witness point 
 set $Y\cap\calV(\bL^{\bfe})$, we wish to compute a 
 $\bfe'$-witness point set for the refinement, where the component $e_i$ of $\bfe$ is split into $e_i'+e_i''$ in $\bfe'$.
 For this, let $\bL^{(e_i',e_i'')}_i$ be $e_i$ general affine forms with
 $e'_i$ in $\CC[\bfy_{i'}]$ and $e''_i$ in $\CC[\bfy_{i''}]$, where $\bfy_i=(\bfy_{i'},\bfy_{i''})$  is the
 corresponding split of the variable group  $\bfy_i$.
 Replacing the $e_i$ affine forms of $L_i\subset\CC[\bfy_{i}]$ in~$\bL^{\bfe}$ by the convex combination
 $tL_i+(1-t)\bL^{(e_i',e_i'')}_i$ gives a homotopy as in Algorithm~\ref{Alg:Refinement} that transforms
 $Y\cap\calV(\bL^{\bfe})$ into $Y\cap\calV(\bL^{\bfe'})$.
\end{rem} 

\subsection{Coarsening}\label{SS:Coarsening}

Suppose that  $\ndot=(n_1,n_2)$ and set $\defcolor{n}:=n_1+n_2$.
Let $Y_{\ndot}\subset\PP^{n_1}\times\PP^{n_2}$ be an irreducible multiprojective variety of intrinsic dimension $e$.
For each $i=1,2$, let $\Lambda_i\in\CC[\bfx^{(i)}]$ be a general linear polynomial.
Then
\[
  \defcolor{Y_{\aff}}\ :=\
  Y_{\ndot}\smallsetminus \calV(\Lambda_1\cdot\Lambda_2)\ \subset\
  \PP^{n_1}\times\PP^{n_2}\smallsetminus \calV(\Lambda_1\cdot\Lambda_2)\ \simeq\
  \CC^{n_1}\times\CC^{n_2}
\]
is a multiaffine variety with the same multidimension and multidegree as $Y_{\ndot}$.
Regarding $\CC^{n_1}\times\CC^{n_2}=\CC^n$ as an affine patch in $\PP^n$, let \defcolor{$Y$} be the closure of
$Y_{\aff}$ in $\PP^n$.
We investigate how to transform a witness collection for $Y_{\ndot}$ into a witness set for $Y$. 
In particular, we describe Algorithm~\ref{Alg:Coarsening}, which 
transforms witness sets under coarsening.
In this algorithm we first 
construct a witness set for the Segre embedding $\sigma(Y)$ from witness points of $Y_{\ndot}$
and then degenerate this witness set into another witness set of $\sigma(Y)$ whose pullback is a witness set for $Y$.
In fact, all steps of Algorithm~\ref{Alg:Coarsening} operate in local coordinates for
$\PP^{n_1}\times\PP^{n_2}$,
not in the ambient space for the Segre map.

The multiprojective space $\PP^{n_1}\times\PP^{n_2}$ is a projective variety under the Segre map
\[
  \sigma\ \colon\ \PP^{n_1}\times\PP^{n_2}\ \hookrightarrow\
  \PP^{n_1n_2+n_1+n_2}\ =:\ \defcolor{\PP^N}\,.
\]
A linear polynomial on $\PP^N$ pulls back to a bilinear form $B$ on $\PP^{n_1}\times\PP^{n_2}$.
Writing $\PP^N$ as 
$\PP(\CC^{n_1+1}\otimes\CC^{n_2+1})=
   \PP(\Mat_{(n_1+1)\times(n_2+1)}(\CC))$, 
   a linear form $\ell$ on $\PP^N$ corresponds to a matrix $M$.
When $M$ has rank one,
the pullback  $\sigma^*(\ell)=\ell^{10}\ell^{01}$ is a product of linear polynomials, one in each set of variables.
Thus $\calV(t\ell^{10}\ell^{01}+(1{-}t)B)$ is a family (of hyperplane sections of $\sigma(\PP^{n_1}\times\PP^{n_2})$)
that transforms the union $Y_{\ndot}\cap(\calV(\ell^{10})\cup\calV(\ell^{01}))$ of multilinear sections into the bilinear
section $Y_{\ndot}\cap\calV(B)$.

Passing from  $\PP^{n_1}\times\PP^{n_2}$ to $\PP^n$ through affine patches, both $B$ and $\ell^{10}\ell^{01}$
remain bilinear forms.
Given a linear polynomial $\ell$ on $\PP^n$ and a choice $z_0$ of coordinate for the hyperplane at infinity, $z_0\ell$ is
another bilinear form whose variety in the affine patch $\PP^n\smallsetminus\calV(z_0)$ is the hyperplane $\calV(\ell)$.
Thus $tB+(1-t)z_0\ell$ or better $t\ell^{10}\ell^{01}+(1-t)z_0\ell$ is a family that may be used to transform the
union $Y_{\ndot}\cap(\calV(\ell^{10})\cup\calV(\ell^{01}))$ of sections of $Y_{\ndot}$ into
the union of the section $Y\cap\calV(\ell)$ with its part $Y\cap\calV(z_0)$ at infinity.

This may be used to transform a multilinear section $Y_{\ndot}\cap\calV(\bL^{(e_1,e_2)})$ into a subset of a
linear section $Y\cap\calV(L^e)$, but only if we work in an affine patch $\PP^n\smallsetminus\calV(z_0)$, as the
bilinear forms coming from linear polynomials in $L^e$ all have $z_0$ as a factor.
By the inequality among degree and multidegree in Proposition~\ref{Prop:perestroika},
we typically obtain a subset of $Y\cap\calV(L^e)$ (a partial witness set).

Let us describe a homotopy for this.
Let $\ell^{10}_1,\dotsc,\ell^{10}_e\in\CC[\bfy_1]$, $\ell^{01}_1,\dotsc,\ell^{01}_e\in\CC[\bfy_2]$, and
$\ell_1,\dotsc,\ell_e\in\CC[\bfy]$ be general affine forms.
Set $\defcolor{\bM}=(\ell^{10}_1\ell^{01}_1,\dotsc,\ell^{10}_e\ell^{01}_e)$ and \mbox{$\defcolor{L^e}=(\ell_1,\dotsc,\ell_e)$}.
Form the homotopy
 \begin{equation}\label{Eq:Coarsening_homotopy}
  H(t)\ :=\  ( F\,,\, t\bM\ +\ (1-t)L^e  \,)\,.
 \end{equation}
 We describe the start points for $H(t)$ at $t = 1$.
 For a partition $S\sqcup T$ of $\{1,\dotsc,e\}$ with 
 \mbox{$(|S|,|T|)\in\Dim(Y_{\aff})$}, let
 $\defcolor{\bL^{S,T}} := \{\ell^{10}_i\mid i\in S\} \cup \{\ell^{01}_j\mid j\in T\}$,
 a subset of linear forms involved above.
 Then, $(F,t\bL^{(|S|,|T|)}+(1-t)\bL^{S,T})$ is a homotopy transforming the witness point set 
 $Y_{\aff}\cap \calV(\bL^{(|S|,|T|)})$ into the multilinear section $\defcolor{W_{S,T}}:=Y_{\aff}\cap \calV(\bL^{S,T})$,
 which is a transverse intersection as the affine forms are general.
 Let \defcolor{$W$} be the disjoint union of all the $W_{S,T}$.  These are disjoint as the
 affine forms are general. 
 
\begin{alg}[Transforming witness sets under coarsening]\label{Alg:Coarsening}
  \mbox{\ }\newline
  {\bf Input:} A witness collection $\{(F,\bL^{\bfe},Y\cap\calV(\bL^{\bfe}))\mid \bfe\in\Dim(Y)\}$ for an equidimensional
  multiaffine variety $Y_{\aff}\subset\CC^{n_1}\times\CC^{n_2}$.\newline
  {\bf Output:} A witness point set $Y^\circ\cap\calV(L^e)$ for the affine variety
  $Y^\circ\subset\CC^{n}=\PP^n\smallsetminus\calV(z_0)$. \newline  
  {\bf Do:}
  Recall from above that $W$ is the union of all the $W_{S,T}$.
  Compute the points of $W$ and use the homotopy~\eqref{Eq:Coarsening_homotopy} to follow the points of $W$ along~$H$ from
  $t=1$ to $t=0$, keeping those whose paths are bounded near $t=0$.
\end{alg}

\begin{proof}[Proof of correctness]
  Observe that in  $\PP^{n_1}\times\PP^{n_2}$ we have 
 \[
   \calV(\bM)\ =\ \calV(\ell^{10}_1\ell^{01}_1,\dotsc,\ell^{10}_e\ell^{01}_e)\ =\ 
   \bigcup_{S\sqcup T=\{1,\dotsc,e\}} \calV(\bL^{S,T})\ .
 \]
 As $Y_{\ndot}\cap\calV(L^{S,T})=W_{S,T}$, we have 
  \[
    Y_{\ndot}\cap \calV(\bM)\ =\ 
     \bigcup_{S\sqcup T=\{1,\dotsc,e\}} Y_{\ndot}\cap \calV(\bL^{S,T})=\ 
     \bigsqcup_{S\sqcup T=\{1,\dotsc,e\}} W_{S,T}\ =\ W \ .
 \]
 By~\eqref{Eq:Segre-degree}, we have
 \begin{equation}\label{Eq:SegreDeg}
    \deg_{\PP^N}(\sigma(Y_{\ndot}))\ =\ \sum_{e_1+e_2=e}\binom{e}{e_1} \Deg_{Y_\ndot}(e_1,e_2)\ =\ |W|\,,
 \end{equation}
 as $|W_{S,T}|= \Deg_{Y_\ndot}(|S|,|T|)$.
 Thus $ Y_{\ndot}\cap \calV(\bM)$ is a transverse intersection consisting of
 $\defcolor{\delta}:=\deg_{\PP^N}(\sigma(Y_{\ndot}))$ points.
 Since $W\subset Y_{\aff}$, for general $t$ the intersection
 \begin{equation}\label{Eq:htpySection}
   Y_{\aff}\cap \calV(t \bM\ +\  (1-t)L^e)
 \end{equation}
 is also transverse and consists of $\delta$ points.
 Thus,~\eqref{Eq:Coarsening_homotopy} is a homotopy.

 Consider the variety in $\PP^n\times\CC_t$ defined by
 \begin{equation}\label{Eq:projSection}
   (Y\times\CC_t)\ \bigcap\ 
   \calV(t \bM\ +\  (1-t)(z_0\ell_1,\dotsc,z_0\ell_e))\,.
 \end{equation}
 (Note the homogenizing variable $z_0$.)
 Since~\eqref{Eq:htpySection} is transverse and consists of $\delta$ points for general $t$, the components
 of~\eqref{Eq:projSection} that map onto $\CC_t$ form a curve $C$ whose general fiber over $t$ is $\delta$ points.
 Restricting $C$ to an arc in $\CC_t$ with endpoints $\{0,1\}$ gives $\delta$ arcs that start (when $t=1$) at the points of
 $W$ and end (at $t=0$) in
 \[
   Y\cap\calV(z_0\ell_1,\dotsc,z_0\ell_e)\ =\
   Y\cap\calV(L^e)\;\bigcup\;
   Y\cap\calV(z_0)\,.
 \]
 As the affine forms $\ell_1,\dotsc,\ell_e$ are general, $Y\cap\calV(L^e)\subset Y^\circ$ is
 transverse and consists of $d=\deg_{\PP^n}(Y)$ points.
 Thus $d$ paths in the homotopy end at the points of $Y^\circ\cap\calV(L^e)$ and 
 $\delta-d$ paths diverge as $t$ approaches 0. 
\end{proof}

Algorithm~\ref{Alg:Coarsening} may use up to $2^e$ witness sets and tracks
$\delta=\deg_{\PP^N}(\sigma(Y_{\ndot}))$ as in~\eqref{Eq:SegreDeg} paths.
While $\delta$ is typically enormous, when $Y_{\ndot}$ is a curve, 
$\delta=\Deg_{Y_{\ndot}}\,(1,0)+\Deg_{Y_{\ndot}}\,(0,1)$, which is the cardinality of the witness collection for $Y_{\ndot}$.
In Example~\ref{ex:cubicPerestroika}, Algorithm~\ref{Alg:Coarsening} starts with five points on the intersection of the
cubic curve with the horizontal and vertical lines at right, passes through a family of hyperbolas, and ends at the
intersection of the cubic curve with the line $L$ in the middle, with two paths diverging to infinity.

\begin{rem}\label{R:CoarsenManyFactors}
 Suppose that $Y\subset\CC^{n_1}\times\dotsb\times\CC^{n_k}$ is a multiaffine variety with $k\geq 2$.
 Let $Y'\subset\CC^{n_1+n_2}\times\CC^{n_3}\times\dotsb\times\CC^{n_k}$ be the variety $Y$ with the multiaffine
 structure induced by merging the first two factors.
 As in Remark~\ref{R:SplittingManyFactors}, Algorithm~\ref{Alg:Coarsening} may be used to transform a witness collection
 for $Y$ into one for $Y'$.

 For each $i=1,\dotsc,k$, let $\bfy_i$ be $n_i$ indeterminates---these are the indeterminates for
 $\CC^{n_1}\times\dotsb\times\CC^{n_k}$.
 Write $\bfx:=(\bfy_1,\bfy_2)$.
 Then the partition of the indeterminates for $\CC^{n_1+n_2}\times\CC^{n_3}\times\dotsb\times\CC^{n_k}$
 is $(\bfx,\bfy_3,\dotsc,\bfy_k)$.
 We explain how to compute an $\bfe'=(e,e_3,\dotsc,e_k)$-witness 
 point set for $Y'$ given a witness collection for
 $Y$.
 (The indexing in $\bfe'$ is intended.)

 An $\bfe'$-witness point set for $Y'$ is an intersection $Y'\cap\calV(\bL^{\bfe'})$, where
 $\bL^{\bfe'}=(L^e,L_3,\dotsc,L_k)$ with $L^e$ consisting of $e$ general affine forms
 $\ell_1,\dotsc,\ell_e\in\CC[\bfx]$ and for $i\geq 3$, $L_i$ consists of $e_i$ general affine forms
 in $\CC[\bfy_i]$.
 As in the discussion preceding Algorithm~\ref{Alg:Coarsening}, let $\ell^{10}_1,\dotsc,\ell^{10}_e\in\CC[\bfy_1]$
 and $\ell^{01}_1,\dotsc,\ell^{01}_e\in\CC[\bfy_2]$ be general affine forms and set
 $\bM$ to be $(\ell^{10}_1\ell^{01}_1,\dotsc,\ell^{10}_e\ell^{01}_e)$.
 Following the same discussion,  for each $S\sqcup T=\{1,\dotsc,e\}$ construct $\bL^{S,T}$, substitute this for $L^e$ in
 $\bL^{\bfe'}$, use the $(|S|,|T|,e_3,\dotsc,e_k)$th witness set for $Y$ to compute $W_{S,T}$, and set $W$ to be the union
 of the $W_{S,T}$. 

 Let $\bL(t)$ be the convex combination $(t\bM+(1-t)L^e\,,\,L_3,\dotsc,L_k)$.
 Then, as in Algorithm~\ref{Alg:Refinement}, we will have a homotopy that transforms the union of witness point sets
 $W=Y\cap\calV(\bM,L_3,\dotsc,L_k)$ into the $\bfe'$-witness point set $Y'\cap\calV(\bL^{\bfe'})$.\hfill$\diamond$
\end{rem} 

\begin{example}\label{Ex:coarseningOctahedron}
  Let us revisit Example~\ref{Ex:Octahedron}, which involved varieties  $\calV(f,g)$ and $\calV(f,h)$ in
  $\CC_x\times\CC_y\times\CC_z\times\CC_w$.
  Both have their multidimension the vertices of an octahedron.
  Of the many coarsenings, we consider four, merging either the last two factors, the first two, both the first and the
  last two, and finally the last three.
  Table~\ref{Ta:multidegrees} displays the multidegrees of the original varieties in
  $\CC_x\times\CC_y\times\CC_z\times\CC_w$ and after merging.\hfill$\diamond$
\end{example}

\begin{table}[htb]
  \caption{Coarsenings of $\calV(f,g)$ and $\calV(f,h)$ from Example~\ref{Ex:Octahedron}.}
  \label{Ta:multidegrees}
\begin{tabular}{|c||c|c|c|c|c|}\hline
   &$\CC_x{\times}\CC_y{\times}\CC_z{\times}\CC_w$&
    $\CC_x{\times}\CC_y{\times}\CC_{zw}^2$& $\CC_{xy}^2{\times}\CC_z{\times}\CC_w$&
    $\CC_{xy}^2{\times}\CC_{zw}^2$& $\CC_x{\times}\CC_{yzw}^3$\\\hline\hline
   \raisebox{27pt}{$\Dim$}&
    \begin{picture}(81,65)(4,0)
     \put(22, 6){\includegraphics{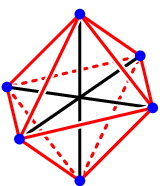}}
     \put(47,1){\small$1001$}     \put( 21,55){\small$0110$}
     \put( 5,11){\small$0011$}     \put( 62,47){\small$1100$}
     \put( 4,38){\small$0101$}     \put( 65,18){\small$1010$}
    \end{picture}
   &
   \raisebox{13pt}{\begin{picture}(61,40)(-7,-3)
     \put(0, 0){\includegraphics{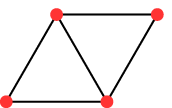}}
     \put( 8,30){\footnotesize$011$}  \put(38,30){\footnotesize$110$}
     \put(-6,-8){\footnotesize$002$}  \put(23,-8){\footnotesize$101$} 
   \end{picture}}
   &
    \raisebox{13pt}{\begin{picture}(61,40)(-7,-3)
     \put(0, 0){\includegraphics{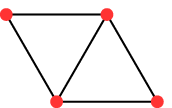}}
     \put(-7,30){\footnotesize$011$}  \put(23,30){\footnotesize$110$}
     \put( 8,-8){\footnotesize$101$}  \put(38,-8){\footnotesize$200$} 
   \end{picture}}
   &
  \raisebox{21pt}{\begin{picture}(66,20)(-3,0)
    \put(0,10){\includegraphics{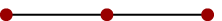}}
    \put(-4,0){\footnotesize$02$} \put(26,0){\footnotesize$11$}  \put(55,0){\footnotesize$20$} 
   \end{picture}}
   &
  \raisebox{21pt}{\begin{picture}(38,20)(-2,0)
   \put(0,10){\includegraphics{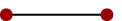}}
    \put(-4,0){\footnotesize$02$} \put(26,0){\footnotesize$11$} 
   \end{picture}}
  \\\hline
   \raisebox{25pt}{$\calV(f,g)$}&
    \begin{picture}(57,61)(16,2)
     \put(22, 6){\includegraphics{figures/Octahedron_sm}}
     \put(48, 2){\small$3$}     \put( 35,54){\small$4$}
     \put(19,16){\small$2$}     \put( 63,46){\small$4$}
     \put(16,33){\small$3$}     \put( 68,24){\small$4$}
    \end{picture}
   &
   \raisebox{11pt}{\begin{picture}(60,40)(-7,-3)
     \put(0, 0){\includegraphics{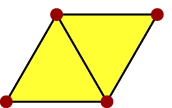}}
     \put( 8,28){\footnotesize$4$}  \put(48,28){\footnotesize$4$}
     \put(-7,-3){\footnotesize$2$}  \put(33,-3){\footnotesize$4$} 
   \end{picture}}
   &
    \raisebox{11pt}{\begin{picture}(60,40)(-7,-3)
     \put(0, 0){\includegraphics{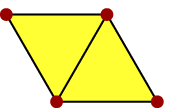}}
     \put(-7,28){\footnotesize$2$}  \put(33,28){\footnotesize$4$}
     \put( 8,-3){\footnotesize$3$}  \put(48,-3){\footnotesize$4$} 
   \end{picture}}
   &
  \raisebox{19pt}{\begin{picture}(65,20)(-2,0)
    \put(0,10){\includegraphics{figures/TwoSeg}}
    \put(-2,0){\footnotesize$2$} \put(28,0){\footnotesize$4$}  \put(57,0){\footnotesize$4$} 
   \end{picture}}
   &
  \raisebox{19pt}{\begin{picture}(38,20)(-2,0)
   \put(0,10){\includegraphics{figures/OneSeg}}
    \put(-2,0){\footnotesize$2$} \put(28,0){\footnotesize$4$} 
   \end{picture}}
  \\\hline
   \raisebox{25pt}{$\calV(f,h)$}&
    \begin{picture}(57,61)(16,2)
     \put(22, 6){\includegraphics{figures/Octahedron_sm}}
     \put(48, 2){\small$5$}     \put( 35,54){\small$5$}
     \put(19,16){\small$3$}     \put( 63,46){\small$7$}
     \put(16,33){\small$4$}     \put( 68,24){\small$6$}
    \end{picture}
   &
   \raisebox{11pt}{\begin{picture}(60,40)(-7,-3)
     \put(0, 0){\includegraphics{figures/Parallelogram}}
     \put( 8,28){\footnotesize$6$}  \put(48,28){\footnotesize$7$}
     \put(-7,-3){\footnotesize$3$}  \put(33,-3){\footnotesize$8$} 
   \end{picture}}
   &
    \raisebox{11pt}{\begin{picture}(60,40)(-7,-3)
     \put(0, 0){\includegraphics{figures/ParallelogramR}}
     \put(-7,28){\footnotesize$3$}  \put(33,28){\footnotesize$10$}
     \put( 8,-3){\footnotesize$8$}  \put(48,-3){\footnotesize$7$} 
   \end{picture}}
   &
  \raisebox{19pt}{\begin{picture}(65,20)(-2,0)
    \put(0,10){\includegraphics{figures/TwoSeg}}
    \put(-2,0){\footnotesize$3$} \put(28,0){\footnotesize$12$} \put(57,0){\footnotesize$7$} 
   \end{picture}}
   &
  \raisebox{19pt}{\begin{picture}(38,20)(-2,0)
   \put(0,10){\includegraphics{figures/OneSeg}}
    \put(-2,0){\footnotesize$7$} \put(28,0){\footnotesize$11$} 
   \end{picture}}
  \\\hline
\end{tabular}
\end{table}

\section{Slicing}\label{sec:slice}

While the dimension of an equidimensional affine or projective variety $X$ is reduced by~1 under a general
linear section (a \demph{slice}), its degree is preserved---if $\dim(X)\geq 1$.
Similarly, the (ir)reducibility of $X$ is preserved when $\dim(X)\geq 2$, by the classical Bertini Theorem.
Consequently decomposing a variety into irreducible components is reduced to the case of curves.

When $X$ is a multiprojective or multiaffine variety, information about its multidimension and multidegrees may be lost
under a general linear section, and its (ir)reducibility may not be preserved, even when $X$ has dimension at least 2.
However, this may be quantified and it leads to useful reductions. 
The subsequent reductions will be exploited in our algorithm for numerical irreducible decomposition in
Section~\ref{S:NID}. 

For $i=1,\dotsc,k$ let \defcolor{$\pi_{\{i\}}$} be the projection onto the $i$th factor in multiprojective or multiaffine 
space.
Let $\defcolor{\epsilon_i}\in\NN^k$ be the vector whose $i$th component is 1 and others are 0.

\begin{lem}\label{lem:Fslice}
  Let $X\subset\PP^\ndot$ (or $X\subset\CC^\ndot$) be an equidimensional multiprojective or multiaffine variety and suppose
  that $\ell$ is a general linear polynomial/affine form in the $i$th variable group.

\begin{enumerate}
\item  
     If $\dim\pi_{\{i\}}(X)=0$, then $X\cap\calV(\ell)=\emptyset$. 
     This is equivalent to $\bfe\in\Dim(X) \Rightarrow e_i=0$.
  
  \item If $\dim\pi_{\{i\}}(X)\geq 1$, then
   \begin{eqnarray*}
       \Dim(X\cap\calV(\ell)) &=& \{ \bfe-\epsilon_i \mid \bfe\in\Dim(X)\ \mbox{ and }\ e_i>0\}\,, \\
       \Deg_{X\cap\calV(\ell)}(\bfe) &=& \Deg_X(\bfe+\epsilon_i)\,.
   \end{eqnarray*}
    
  \item If $\dim\pi_{\{i\}}(X)\geq 2$, then $X$ is (ir)reducible if and only if $X\cap\calV(\ell)$ is (ir)reducible.
\end{enumerate}  
\end{lem}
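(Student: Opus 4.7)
The plan exploits the fact that since $\ell$ is a general linear polynomial in only the $i$th variable group, $\calV(\ell) = \pi_{\{i\}}^{-1}(H)$ for a general hyperplane $H \subset \PP^{n_i}$ (or affine hyperplane $H \subset \CC^{n_i}$ in the multiaffine setting). Part~(1) follows at once: if $\dim\pi_{\{i\}}(X) = 0$, then $\pi_{\{i\}}(X)$ is a finite set disjoint from a general $H$, forcing $X \cap \calV(\ell) = \emptyset$. The reformulation in terms of the multidimension is immediate from Proposition~\ref{P:CLZ} with $I = \{i\}$: every $\bfe \in \Dim(X)$ satisfies $e_i \leq \dim_{\{i\}}(X)$, and conversely this bound is attained by some (integer) vertex of the polymatroid polytope $\Dim(X)$.

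Part~(2) rests on the observation that, up to genericity, the collection consisting of $\ell$ together with $e_i$ general linear forms in the $i$th variable group is the same data as $e_i + 1$ general linear forms in that group. Consequently, for every $\bfe \in [\ndot]$,
\[
  (X \cap \calV(\ell)) \cap \calV(\bL^\bfe) \;=\; X \cap \calV(\bL^{\bfe + \epsilon_i}),
\]
where the right side is a general linear section of type $\bfe + \epsilon_i$. One side is finite and nonempty exactly when the other is, which yields $\Dim(X \cap \calV(\ell)) = \{\bfe - \epsilon_i \mid \bfe \in \Dim(X),\ e_i > 0\}$, and comparing cardinalities gives $\Deg_{X \cap \calV(\ell)}(\bfe) = \Deg_X(\bfe + \epsilon_i)$.

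Part~(3) invokes Bertini's theorem~\cite{jouanolou} for the dominant morphism $\pi_{\{i\}} \colon X \to \pi_{\{i\}}(X)$. For the forward direction, if $X$ is irreducible then $\pi_{\{i\}}(X) \subset \PP^{n_i}$ is irreducible of dimension at least two, so the preimage of the general hyperplane section $\pi_{\{i\}}(X) \cap H$ is irreducible; this preimage is exactly $X \cap \calV(\ell)$. For the converse, assume $X = X_1 \cup X_2$ is a nontrivial equidimensional decomposition. Equidimensionality gives $\Dim(X_j) = \Dim(X)$, and the polymatroid argument from part~(1) yields some $\bfe \in \Dim(X)$ with $e_i = \dim\pi_{\{i\}}(X) \geq 2$, hence $\dim\pi_{\{i\}}(X_j) \geq 2$ for each $j$. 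In particular each $X_j \cap \calV(\ell)$ has dimension $\dim(X_j) - 1$, while the generality of $\ell$ ensures $X_1 \cap X_2 \cap \calV(\ell)$ has strictly smaller dimension; thus $X_1 \cap \calV(\ell) \not\subset X_2 \cap \calV(\ell)$ and vice versa, so $X \cap \calV(\ell)$ is reducible.

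The main obstacle will be invoking the correct form of Bertini in the forward direction of part~(3), since the fibers of $\pi_{\{i\}}$ may be positive-dimensional and even reducible. The needed statement is the version guaranteeing irreducibility of the preimage of a general hyperplane under a dominant morphism whose image has dimension at least two, rather than the more elementary irreducibility of a general hyperplane section of an irreducible variety. A secondary subtlety, used in both part~(1) and the converse of part~(3), is the verification that the polymatroid polytope $\Dim(X)$ actually attains the extremal value $\dim_{\{i\}}(X)$ at one of its lattice points, which follows from integrality of the polymatroid rank function.
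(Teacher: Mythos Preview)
Your proposal is correct and follows the same approach as the paper, whose own proof is only two lines: it states that (1) and (2) follow from the definitions in Subsection~\ref{SS:Multiproj} and that (3) follows from the Bertini theorem for maps to projective space \cite[Thm.~6.3(4)]{jouanolou}. Your argument is a faithful elaboration of that sketch; in particular, the form of Bertini you single out as the main obstacle (irreducibility of the preimage of a general hyperplane under a morphism from an irreducible variety with image of dimension $\geq 2$) is exactly the cited result, and your converse argument for (3) and polymatroid check in (1) simply fill in details the paper leaves implicit.
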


\begin{proof}
  Statements (1) and (2) follow from the definitions given in Subsection~\ref{SS:Multiproj}, and (3) follows from the
  Bertini Theorem for maps to projective space~\cite[Thm.~6.3~(4)]{jouanolou}. 
\end{proof}
\newcommand{\bfm}{\mathbf{m}}

\begin{example}\label{Ex:slicing}
 Let $\ndot=(3,3,3)$ and consider a subvariety $X$ of $\PP^\ndot$ given by six general multihomogeneous polynomials, each of
 multidegree $(1,2,3)$.
 Then $X$ is irreducible and has intrinsic dimension 3.
 Following Remark~\ref{R:intTheory} and~\cite{Fulton}, its multidimension and multidegree are computed as follows.
 Its cohomology class in $\PP^\ndot$ is the normal form of $(s_1+2s_2+3s_3)^6$ in
 $\ZZ[s_1,s_2,s_3]/\langle s_1^4,s_2^4,s_3^4 \rangle$, which is 
  \begin{multline*}
   \qquad 160s_1^3s_2^3\,+\,720s_1^3s_2^2s_3\,+\,1440s_1^2s_2^3s_3\,+\,1080s_1^3s_2s_3^2\,+\,3240s_1^2s_2^2s_3^2\\
    \,+\,4320s_1s_2^3s_3^2\,+\,540s_1^3s_3^3\,+\,3240s_1^2s_2s_3^3\,+\,6480s_1s_2^2s_3^3\,+\,4320s_2^3s_3^3\ .\qquad
  \end{multline*}
  Its homology class is obtained by replacing $s_1^{3-a}s_2^{3-b}s_3^{3-c}$ by
  $\bfT^{abc}$. 
  We display its multidimension and multidegree below.
  (The central point in $\Dim(X)$ is $111$.)
 \begin{equation}\label{Eq:JoseExample}
 \raisebox{-33pt}{\begin{picture}(85,73)(-9,0)
    \put(-1.5,9.5){\includegraphics{figures/DR0}}
    \put(14,67){\small$030$} 
    \put( 3,47){\small$021$}\put( 47,47){\small$120$}
    \put(-9,27){\small$012$}                         \put( 59,27){\small$210$}
    \put(-9, 0){\small$003$}\put( 14, 0){\small$102$}\put( 36, 0){\small$201$}\put(58, 0){\small$300$}
   \put(23,33){{\color{white}\rule{20pt}{10pt}}}  \put(25,34.5){\small$111$}
  \end{picture}}
  \qquad
 \raisebox{-41pt}{\begin{picture}(107,90)(-9,0)
    \put(-1.5,9.5){\includegraphics{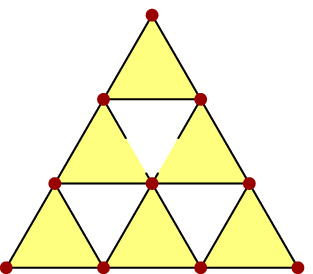}}
    \put(23,82){\small$540$} 
    \put(  3,58){\small$1080$}\put( 60,58){\small$3240$}
    \put(-5,33){\small$720$}    \put( 31,40){\small$3240$}   \put(73.5,33){\small$6480$}
    \put(-9, 0){\small$160$}\put( 17, 0){\small$1440$}\put( 45.5, 0){\small$4320$}\put(74, 0){\small$4320$}
  \end{picture}}
 \end{equation}
Slicing with a general linear polynomial $\calV(\ell)$ in the $i$th variable group gives a variety of dimension two in a
product $\PP^2\times\PP^3\times\PP^3$ (permuted so that $\PP^2$ is the $i$th factor) with multidimension and multidegrees
as shown below.
\[
  \begin{picture}(71,72)(-16,-11)
    \put(-3,9.5){\includegraphics{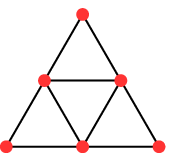}}
    \put(12,53){\small$020$}
    \put(-12,27){\small$011$} \put(36,27){\small$110$}
    \put(-16, 0){\small$002$}\put( 12, 0){\small$101$}\put( 39, 0){\small$200$}
  \end{picture}
   \qquad
  \begin{picture}(76,74)(-16,-12)
    \put(-3,9.5){\includegraphics{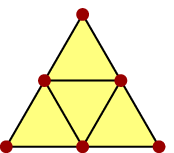}}
    \put(10,53){\small$1080$}
    \put(-11,27){\small$720$} \put(36,27){\small$3240$}
    \put(-16, 0){\small$160$}\put(10, 0){\small$1440$}\put( 39, 0){\small$4320$}
    \put(9,-12){\small$i=1$}
  \end{picture}
   \qquad
  \begin{picture}(77,74)(-16,-12)
    \put(-3,9.5){\includegraphics{figures/D2}}
    \put(12,53){\small$540$}
    \put(-16,27){\small$1080$} \put(36,27){\small$3240$}
    \put(-16, 0){\small$720$}\put( 10, 0){\small$3240$}\put( 39, 0){\small$6480$}
    \put(9,-12){\small$i=2$}
  \end{picture}
   \qquad
  \begin{picture}(80,74)(-20,-12)
    \put(-3,9.5){\includegraphics{figures/D2}}
    \put(10,53){\small$3240$}
    \put(-16,27){\small$3240$} \put(36,27){\small$6480$}
    \put(-20, 0){\small$1440$}\put( 10, 0){\small$4320$}\put( 39, 0){\small$4320$}
    \put( 9,-12){\small$i=3$}
  \end{picture}
\]
 Slicing with another general linear polynomial gives an irreducible curve in either $\PP^1\times\PP^3\times\PP^3$ or
 $\PP^2\times\PP^2\times\PP^3$ (with possibly permuted factors) of multidimension $\{001, 010, 100\}$, and
 multidegrees corresponding to one of the six upright shaded triangles in~\eqref{Eq:JoseExample}.\hfill$\diamond$
\end{example}

A consequence of Lemma~\ref{lem:Fslice} is that obtaining a witness collection for a linear slice $X\cap\calV(\ell)$
from one for $X$ is a matter of bookkeeping.
We recall the definitions from Subsection~\ref{SS:Multiproj}.
Let $X$ be an equidimensional union of components of $\calV(F)$.
Choose general linear polynomials $\ell_{i,1},\dotsc,\ell_{i,n_i}\in\CC[\bfx_i]$ for each $i\in \{1,\dots,k\}$, and for
$\bfe\in[\ndot]$, set $\defcolor{L_i^{e_i}}:=(\ell_{i,1},\dotsc,\ell_{i,e_i})$ and
$\defcolor{\bL^\bfe}:=(L_1^{e_1},\dotsc,L_k^{e_k})$. 
Then, for $\bfe\in\Dim(X)$, the $\bfe$-witness set is $(F,\bL^\bfe,W_{\bfe})$, where
$\defcolor{W_\bfe}:=X\cap\calV(\bL^{\bfe})$. 
We use the same notation for a multiaffine variety $X\subset\CC^{\ndot}$.

\begin{alg}[Witness collection of a slice]\label{alg:mSlice}
  \mbox{\ }\newline
  {\bf Input:} A witness collection $\{(F,\bL^{\bfe},W_\bfe)\}$ for an equidimensional multiprojective or multiaffine variety
  $X\subset\PP^\ndot$ (or $X\subset\CC^\ndot$) and an index $i\in\{1,\dots,k\}$.
  \newline 
  {\bf Output:} A witness collection for $X\cap \calV(\ell_{i,1})$.
    \newline
  {\bf Do:} Return $\{(F\cup\{\ell_{i,1}\},\bL^{\bfe}\smallsetminus\{\ell_{i,1}\},W_\bfe) \mid \bfe\in\Dim(X)\mbox{ and }e_i>0\}$.
\end{alg}

\begin{proof}[Proof of correctness]
  By Lemma~\ref{lem:Fslice}, $\Dim(X\cap \calV(\ell_{i,1}))=\{\bfe-\epsilon_i \mid \bfe\in\Dim(X)\mbox{ and }e_i>0\}$,
  as $\ell_{i,1}$ is general.
  Moreover, $W_{\bfe}=X\cap \calV(\bL^{\bfe})=X\cap \calV(\ell_{i,1}) \cap\calV(\bL^{\bfe}\smallsetminus\{\ell_{i,1}\})$,
  so that~$W_{\bfe}$ is both an $\bfe$-witness point 
  set for $X$  and an $(\bfe-\epsilon_i)$-witness point set for $X\cap \calV(\ell_{i,1})$.
\end{proof}  

\begin{rem}\label{R:IterateSlicing}
 As indicated in Example~\ref{Ex:slicing}, both Lemma~\ref{lem:Fslice} and Algorithm~\ref{alg:mSlice} may be applied in
 succession to a variety and its witness collection.
 If the projection of the variety to the $i$th factor has dimension at least two, this preserves the irreducible
 components.
 The choice of slice affects the size of the output.
 In Example~\ref{Ex:slicing}, slicing twice with a linear polynomial in $\bfx_1$ gives a witness collection with
 $160+720+1440=2320$ points, while slicing twice with a linear polynomial in $\bfx_3$ yields $4320+4320+6480=15120$ points. 
\end{rem}  

\section{Numerical decompositions of algebraic varieties}
\label{S:NID}

\subsection{Affine and projective varieties}\label{SS:NIDaffine}
\newcommand{\aComp}{Y}
Any variety has a unique (irredundant) decomposition into irreducible components.
For subvarieties of $\CC^n$ or $\PP^n$, a 
numerical irreducible decomposition 
mirrors the irreducible decomposition by 
producing a formal union of witness sets, one for each irreducible component.
As described in Subsection~\ref{SS:witnessSets},
we summarize a well-known approach for computing
a numerical irreducible decomposition 
for equidimensional varieties in $\CC^n$.

\begin{alg}[Equidimensional numerical irreducible decomposition in $\CC^n$]\label{alg:NIDaffine}
  \mbox{\ }\newline
       {\bf Input:} A witness set $(F,L,W)$ for equidimensional $X\subset\calV(F)$.
       \newline 
      {\bf Output:} A numerical irreducible decomposition of $X$.
      \newline
      {\bf Do:} Perform monodromy loops to partition the witness point set
      $W$ into subsets of points $P_1\sqcup\cdots\sqcup P_s$       
      where all points in each $P_i$  lie on the same irreducible component.
      Repeat until the trace test confirms that each $P_i$ is a witness point set for some irreducible component yielding
      the numerical irreducible decomposition $\sqcup_i (F,L,P_i)$.
\end{alg}

For varieties that are not equidimensional, one simply performs a numerical irreducible decomposition on each of its
equidimensional components.  

\subsection{Multiprojective varieties}\label{SS:NIDmulti}

For a numerical irreducible decomposition of a multiprojective variety $\mathcal{V}(F)$,
it makes sense to set a goal to partition an {\it arbitrary} set of general points according to membership in the irreducible components.
Algorithm~\ref{alg:NID} (below) still applies to points in witness collections and can be modified to look similar to Algorithm~\ref{alg:equi} in a special case, which is explained in Remark~\ref{rem:old-to-new-WC}.
 
The underlying idea is to systematically loop
through the given points and determine if a point lies
on a previously computed irreducible component.
If not, information about this new irreducible component
must be computed and it is then added to the list of known components.
At a minimum, a membership test for this new irreducible
component must be developed.  
To that end, one uses the given point $p$ to compute the (multi)dimension of the corresponding irreducible component. 
This determines a sequence of slices that preserve irreducibility
(Lemma~\ref{lem:Fslice}) and coarsenings~(Subsection~\ref{SS:Coarsening}) 
which can be used to produce a system of linear polynomials~$L$ 
vanishing at~$p$ such that the irreducible component becomes an irreducible affine
curve $C_L \subset \mathcal{V}(F)\cap\mathcal{V}(L)$ containing~$p$. 
Then, a complete witness point set for $C_L$ can be constructed from $p$ via monodromy
(Algorithm~\ref{alg:completion}), with the trace test used as a stopping criterion.

Given another point $q$ from the given set of general points,
one first checks if $p$ and $q$ have the same (multi)dimension.
If so, then one produces a system $L'$ of similar structure
to $L$ but vanishing at $q$.  After computing a witness point set of 
the corresponding $C_{L'}$ from~$C_L$, 
testing membership (Remark~\ref{r:move-witness})
of $q$ in $C_{L'}$ is equivalent to determining 
if $p$ and $q$ lie on the same irreducible component.
Note that genericity assumptions on $p$ and~$q$ are needed here to avoid 
losing transversality as in \cite[Ex.~3.2]{HR15}.

\begin{alg}[Numerical irreducible decomposition in $\CC^\ndot$]\label{alg:NID}
\mbox{\ }\newline
{\bf Input:} A finite set $W\subset \calV(F) \subset\CC^{\ndot}$ of general smooth points.
\newline 
{\bf Output:} A partition of $W$ into sets corresponding to irreducible components. \newline 
{\bf Do:} Make use of the developed toolkit to represent irreducible components of $\calV(F)$ containing points $W$ with curves in an affine space and use this representation to sort the points into the respective components as follows:

\begin{itemize}
  \item Initialize the numerical irreducible decomposition $N$ to be the empty set. 
  \item While $W$ is nonempty:
  \begin{enumerate}
    \item Select a point $p\in W$, and let $X_p$ denote the irreducible component of $\calV(F)$ containing $p$.  
    \item Determine $\Dim(X_p)$ using Algorithm~\ref{A:local-dimension}.
    \item\label{item:m} Let $\bfm$ denote a coordinatewise maximal integer vector in $[\ndot]$ such that $\bL^\bfm$ is
        slice preserving the irredicubility of $X_p$.
    \item\label{item:bfe}  Choose an element $\bfe$ in $\Dim(X_p\cap \calV(\bL^\bfm)) \subset \{0,1\}^k$.
    \item\label{item:I}    
      Let $I = \{i_1,\dots,i_{|\bfe|}\}$ denote the positions of $\bfe$ with a nonzero entry indexed such that
      $ j= \dim \pi_{\{i_1,\dots, i_j\}}(X_p)$.
    \item  For $j\in \{ 1,\dots,|\bfe|-1\}$,
    let $\ell_j$ be a general linear polynomial in $\bfx_{i_1},\dots,\bfx_{i_{j+1}}$.
    \item\label{item:MT} Let $L := \bL^\bfe \cup \{\ell_1,\dots,\ell_{|\bfe|-1}\}$.    
    Use Algorithm~\ref{alg:completion} to compute a witness set  for 
    $C_L :=  X_p\cap \calV(L)$, 
    thereby deriving a membership test for $X_p$. 
    \item
    Let $M_p$ denote the points of $W$ which are members of $X_p$.
    \item
    Replace $W$ with $W\setminus M_p$ and append to $N$ the set $M_p$.    
  \end{enumerate}
  \item Return the numerical irreducible decomposition $N$ of $W$.
\end{itemize}

\end{alg}

\begin{proof}[Proof of correctness]
Steps \eqref{item:m}, \eqref{item:bfe},  and \eqref{item:MT} are the only steps needing
further justification.  
The existence of $\bfm$ in Step~\eqref{item:m} is a consequence of iterating Lemma~\ref{lem:Fslice}.
Furthermore, for $i\in\{1,\dots, k\}$ the projection $\pi_{\{i\}}(X\cap\calV(\bL^\bfm))$ is a point or a curve because
$\bfm$ is maximal.
Then it follows $\Dim(X_p\cap \calV(\bL^\bfm)) \subset \{0,1\}^k$ as stated in Step~\eqref{item:bfe}.
Finally, by Remark~\ref{r:move-witness-muttiprojective},
a witness set $(F\cup L,\ell, S )$ for $C_L$ yields a membership test for $q$ in the variety $X_p$ using the homotopy 
$H(t)=(F,\,L-(1-t)L(q),\, \ell-(1-t)\ell(q))$ with start points $S$.
\end{proof}

Theorem~\ref{thm:Product} can be used to simplify computation for the points that belong to components that are
Cartesian products by an obvious divide-and-conquer procedure.  

\begin{rem}\label{rem:old-to-new-WC}
 Given a complete witness collection for an equidimensional variety $X\subset\calV(F)\subset\CC^{\ndot}$, one can
 apply monodromy as in Algorithm~\ref{alg:NIDaffine} as a heuristic
 for numerical irreducible decomposition. 
 However, without slicing and coarsening, the trace test
 cannot be used to ensure the completion of such an algorithm.

 Coarsening changes the geometry, rendering the prior witness collection irrelevant.
 One can create a new witness collection using the old one in the fashion of Algorithm~\ref{Alg:Coarsening} 
 to avoid using monodromy to reconstruct witness points in the hope of reducing the computational cost.
 In some cases, the completeness of the new witness collection is guaranteed.
 For a curve in a product of two projective spaces considered in detail in~\cite{traceLRS},
 the original witness collection is linked to the new witness set with an optimal (one-to-one) homotopy. 
  Hence, a decomposition via Algorithm~\ref{alg:NIDaffine} on the new witness set induces a decomposition on the
  original~witness~collection. 

  In general, however, a witness set produced by Algorithm~\ref{Alg:Coarsening} is incomplete and thus, this one-to-one
  correspondence is lost.
  One can choose to complete the witness set and decompose the result yielding a decomposition of the original witness
  collection.  
  Whether this approach has an advantage over the method outlined in the beginning of this subsection depends on the number
  and nature of coarsenings taken. 
\end{rem}

\begin{example}\label{Ex:OctahedronNID}
Consider $Y:=\calV(f,h)\subset\CC_x\times\CC_y\times\CC_z\times\CC_w$
from Example~\ref{Ex:Octahedron}.  
We discuss three of the ways to reduce to an affine curve preserving irreducibility.  

One could simply coarsen to $\CC^4$ and then intersect with a general hyperplane in $\CC^4$.
This yields an irreducible curve of degree $15$ showing that $Y$ is irreducible.

Another option is to coarsen $\CC_y\times\CC_z\times\CC_w$ to $\CC^3$,
intersect with a general hyperplane in this~$\CC^3$, and then coarsen $\CC_x\times\CC^3$ to $\CC^4$.
This also yields an irreducible curve of degree~$15$.

A final option that we will consider is to coarsen $\CC_y\times\CC_z$ to $\CC^2$,
intersect with a general hyperplane in this~$\CC^2$, and then coarsen
$\CC_x\times\CC_w\times\CC^2$ to $\CC^4$.  
This yields an irreducible curve of degree~$12$.\hfill$\diamond$
\end{example}

As demonstrated in Example~\ref{Ex:OctahedronNID},
different reductions to affine curves can yield different 
degrees.  We leave it as a possible topic
for future research to consider finding combinations of 
coarsening, slicing, and potential factoring that result in 
the smallest degree.

\section{Fiber product examples}\label{sec:FiberProductEx}

Computing exceptional sets using fiber products \cite{SWFiberProducts} yields multihomogeneous systems.
We illustrate some of the tools from the toolkit on two examples: rulings of a hyperboloid and
exceptional planar pentads. 

\subsection{Rulings of a hyperboloid}\label{sec:hyperboloid}

Motivated by~\cite[\S~4]{SWFiberProducts}, we use fiber products to compute the two rulings of the hyperboloid $H\subset\CC^3$ defined by
\[
  h(\bfx)\ =\ x_1^2+x_2^2-x_3^2-1\,.
\]
\begin{figure}[!htb]
  \centering
  \includegraphics[height=120pt]{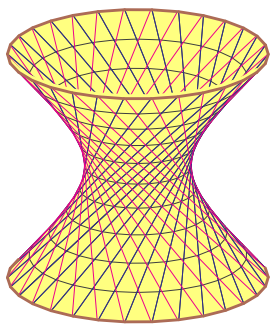}
  \caption{A hyperboloid with its two rulings.}\label{F:hyperboloid}
\end{figure}
Since a generic line meets $H$ in two points, a line which meets $H$ in three points must
be contained in $H$.  
For each $\blambda\in\CC_{\blambda}^4$, we associate 
a line  $\defcolor{L_{\blambda}}= \calV(\ell_{\blambda})\subset\CC^3$ where 
\[
     \ell_{\blambda}(\bfx)\ =\ \left(
           \lambda_1 x_1 + \lambda_2 x_2 - x_3 \,,\,
           \lambda_3 x_1 + \lambda_4 x_2 - 1\right)\ .
\]
%
%
Consider the following 
system
 on
$\CC_{\blambda}^4\times\CC_{\bfx_1}^3\times\CC_{\bfx_2}^3\times\CC_{\bfx_3}^3$,
\[
     F(\blambda,\bfx_1,\bfx_2,\bfx_3)\ =\ 
        \left(\ell_{\blambda}(\bfx_1)\,,\,
        \ell_{\blambda}(\bfx_2)\,,\,
        \ell_{\blambda}(\bfx_3)\,,\,
        h(\bfx_1)\,,\, h(\bfx_2)\,,\, h(\bfx_3) \right)\,.
\]
A ruling of $H$ corresponds to a four-dimensional 
irreducible component $X\subset\calV(F)$ such that there 
exists an irreducible curve $C\subset\CC_{\blambda}^4$ where
 \begin{equation}\label{eq:Xfiberproduct}
{
   X\ =\ \overline{\bigcup_{{\blambda}\in C} \{({\blambda},\bfx_1,\bfx_2,\bfx_3)
   : \bfx_i\in \calV(\ell_{\blambda})\}
   }
   }.
 \end{equation}
In particular, $\overline{\pi_1(X)} = C$
and $\overline{\pi_i(X)} = H$ for $i = 2,3,4$.

For $\bfe = (1,1,1,1)$, the witness point set $W_\bfe := \calV(F)\cap\calV(\bL^\bfe)$ consists of $16$ isolated points.  
The following uses our toolkit to determine the irreducible components corresponding to the rulings.

Using Algorithms~\ref{A:local-dimension} and~\ref{alg:equi},  we compute the dimensions of components  containing points of
$W_\bfe$ 
under different projections. 
The following table records the relevant information up to symmetry. 
 \[
   \begin{array}{c|c|c|c|c}  
     \hbox{$\#$~points~in~$W_\bfe$} & 
      \dim \overline{\pi_{\{1\}}(X)} & 
     \dim \overline{\pi_{\{2\}}(X)} & 
      \dim \overline{\pi_{\{1,2\}}(X)} & 
     \dim \overline{\pi_{\{1,2,3\}}(X)} \\\hline
         4 & 1 & 2 & 2 & 3 \\
         12 & 4 & 2 & 4 & 4
   \end{array}
 \]
The first column shows that the points of $W_\bfe$ lie on components of $\calV(F)$ having two
distinct multidimensions. 
 Let $W'_\bfe$ consist of the four points from the first row of this table.  
 For each $({\blambda},\bfx_1,\bfx_2,\bfx_3)\in W'_\bfe$, the last column implies that
 the fiber over $({\blambda},\bfx_{i_1},\bfx_{i_2})$ for distinct $i_1,i_2\in \{2,3,4\}$ is one-dimensional.
 The trace test shows that each irreducible component of the fiber is linear as expected from \eqref{eq:Xfiberproduct}. 
We note that the number of witness points 
when treating $F$ as system in $\CC^{13}$ 
is~$120$ and much larger than the number of points in the witness sets in the previous table.

Finally, we compute the irreducible components of $\overline{\pi_{\{1\}} (\mathcal{V}(F))}$.
 Using monodromy and the trace test in $\CC^4$, we partition $W'_\bfe$ into two sets of size two,
 each corresponding to a distinct ruling of the hyperboloid.
 The rulings correspond to the two irreducible curves
 \[
     \{(\lambda_1,\lambda_2,-\lambda_2,\lambda_1)\mid\lambda_1^2 + \lambda_2^2 = 1\}
      \qquad\mbox{and}\qquad
      \{(\lambda_1,\lambda_2,\lambda_2,-\lambda_1)\mid\lambda_1^2 + \lambda_2^2 = 1\}\,.
 \]      
{The symbolic expressions of these
curves are classically known, but
one can also recover them directly 
from the computed witness points via \cite{ExactnessRecovery}.}

\subsection{Exceptional planar pentads}\label{sec:planarpentad}

A planar pentad is a 3-RR mechanism constructed by connecting the vertices of two triangles
by three legs with revolute joints as shown in Figure~\ref{fig:planarpentad}.
\begin{figure}[htb]
  \begin{picture}(145,115)(-10,-5)
    \put(0,0){\includegraphics{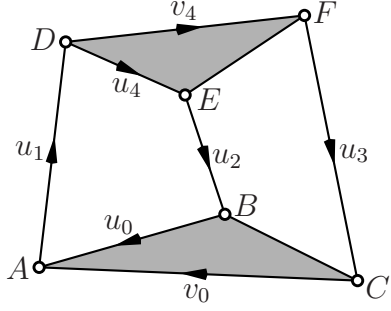}}
    \put(-1,90){$D$}  \put(52,103){$v_4$}   \put(106,100){$F$}
    \put(30,74){$u_4$}  \put(62,67){$E$}
    \put(-6.5,50){$u_1$}  \put(68,47){$u_2$}   \put(116,49){$u_3$}
    \put(27,22){$u_0$}  \put(76,27){$B$}
    \put(-10,3){$A$}  \put(57,-4){$v_0$}   \put(126,-4){$C$}
  \end{picture}
\caption{A planar pentad mechanism}\label{fig:planarpentad}
\end{figure}
We fix one of the triangles in the plane to remove the trivial motion of the entire mechanism.
A generic mechanism can be assembled in six different configurations, which is the degree of SE(2) as described
in~\cite[Table~1]{HSWexceptional}.
Since a generic planar pentad is \defcolor{rigid} (i.e., does not move), a planar pentad is \defcolor{exceptional} when it exhibits motion.

Using isotropic coordinates, the parameters in Figure~\ref{fig:planarpentad} are 
\[
   ([u_0,u_1,u_2,u_3,u_4,v_0,v_4], 
   [\ol{u}_0,\ol{u}_1,\ol{u}_2,\ol{u}_3,\ol{u}_4,\ol{v}_0,\ol{v}_4]  )\in\PP^6\times\PP^6
\]
 with the following assemblability restrictions:
 \[
     u_0+u_1+u_2+u_4 = \ol{u}_0+\ol{u}_1+\ol{u}_2+\ol{u}_4 = v_0+u_1+u_3+v_4 = 
     \ol{v}_0+\ol{u}_1+\ol{u}_3+\ol{v}_4 = 0.
  \]
A mechanism is \defcolor{degenerate}
if a parameter, $u_0-v_0$, 
$\ol{u}_0-\ol{v}_0$, 
$u_4-v_4$, or $\ol{u}_4-\ol{v}_4$
is zero.  Since nondegenerate mechanisms
are desired, we dehomogenize the parameter 
space by setting $v_0 = \ol{v}_0 = 1$.
Hence, the parameter space becomes
\[
  (\bfu,\ol\bfu)\ =\ (u_1,u_2,u_3,u_4,\ol{u}_1,\ol{u}_2,\ol{u}_3,\ol{u}_4)\in\CC^4\times\CC^4\,,
\]
where
\[
  u_0 = -(u_1+u_2+u_4)\,,\ 
  \ol{u}_0 = -(\ol{u}_1+\ol{u}_2+\ol{u}_4)\,,\ 
  v_4 = -(u_1+u_3+1)\,,\
  \ol{v}_4 = -(\ol{u}_1+\ol{u}_3+1)\,.
\]
A mechanism is \defcolor{physically meaningful}
if $\ol\bfu = {\rm conjugate}(\bfu)$.

For $(\btheta,\ol\btheta)\in\CC^4\times\CC^4$, the poses corresponding to link lengths
$(\bfu,\ol\bfu)$ satisfy
\[
  G(\bfu,\ol\bfu,\btheta,\ol\btheta)\ :=\ 
     \left(\begin{array}{c}
       \theta_1 \ol{\theta}_1 - 1 \\
       \theta_2 \ol{\theta}_2 - 1 \\
       \theta_3 \ol{\theta}_3 - 1 \\
       \theta_4 \ol{\theta}_4 - 1 \\
       u_0+u_1\theta_1+u_2\theta_2+u_4\theta_4 \\
       \ol{u}_0+\ol{u}_1\ol{\theta}_1+\ol{u}_2\ol{\theta}_2+\ol{u}_4\ol{\theta}_4 \\ 
       1+u_1\theta_1+u_3\theta_3+v_4\theta_4 \\
       1+\ol{u}_1\ol{\theta}_1+\ol{u}_3\ol{\theta}_3+\ol{v}_4\ol{\theta}_4
     \end{array}\right)\ =\ 0\,.
\]
The six configurations of a general planar
pentad correspond with the six points
in the witness point set
$W_\bfe = \calV(F)\cap\calV(\bL^\bfe)$ 
for $\bfe = (4,4,0,0)$.

The only family of nondegenerate 
exceptional planar pentads are the double-parallel\-ogram linkages~\cite{SWAlgKinematics}, namely
\[
     \mathcal{U}\ :=\ \{(\bfu,\ol\bfu)\mid u_1+u_2=u_1+u_3=\ol{u}_1+\ol{u}_2=
     \ol{u}_1+\ol{u}_3=0\}\ \subset\ \CC^4\times\CC^4\,.
\]
We aim to compute $\mathcal{U}$
directly from $G$ by using fiber products.
Since $\mathcal{U}$ has codimension four,
Corollary~2.14 of \cite{SWFiberProducts} shows
that $\mathcal{U}$ will correspond with
an irreducible component of the fourth
fiber product system, namely
\[
  F(\bfu,\ol\bfu,
     \btheta_1,\ol\btheta_1,
     \btheta_2,\ol\btheta_2,
     \btheta_3,\ol\btheta_3,
     \btheta_4,\ol\btheta_4)
     \ =\ \left(\begin{array}{c}
              G(\bfu,\ol\bfu, \btheta_1,\ol\btheta_1) \\
              G(\bfu,\ol\bfu,\btheta_2,\ol\btheta_2) \\
              G(\bfu,\ol\bfu,\btheta_3,\ol\btheta_3) \\
              G(\bfu,\ol\bfu,\btheta_4,\ol\btheta_4)
        \end{array}\right)\ =\ 0\,.
\]
The irreducible component $X\subset\calV(F)\subset(\CC^4)^{10}$
corresponding to $\mathcal{U}$
is called a \defcolor{main component} 
in \cite{SWFiberProducts}.  
In fact, $X$ is a Cartesian product of $\mathcal{U}$ with four copies of 
$${\{(\alpha,\alpha,\alpha,1,\, \alpha^{-1},\alpha^{-1},\alpha^{-1},1)
\in \mathbb{C}^4 \times \mathbb{C}^4
\mid\alpha \in \mathbb{C}^*\}}.$$
This corresponds with rotating $\bigtriangleup ABC$ about a fixed $\bigtriangleup DEF$.

A necessary condition for locating 
such an exceptional component 
of codimension four consisting
of nondegenerate and physically meaningful
linkages with one dimension of motion
is that there exists 
$\bfe\in\Dim(\calV(F))$ 
such that $e_1=e_2=2$ and $e_3+e_4=e_5+e_6=e_7+e_8=e_9+e_{10}=1$.
A sufficient condition for such a component to exist is that one of the isolated points in $\calV(F)\cap \mathcal{V}(L^\bfe)$ is a general point of $X$. 
To that end, we first compute 
the $\bfe$-witness point set $W_\bfe$
for $\calV(F)$ where $\bfe=(2,2,1,0,1,0,1,0,1,0)$
resulting in 14,828 isolated nonsingular
points in $\calV(F)\cap \mathcal{V}(L^\bfe)$.
The results of using 
Algorithm~\ref{A:local-dimension}
to compute the the local dimension at each 
of these 14,828 witness points along with the
local fiber dimensions obtained by
fixing the $(\bfu,\ol\bfu)$ coordinates
and coarsening the fiber to the natural $(\CC^8)^4$
are summarized in the following.
 \[
   \begin{array}{c|c|c}  
     \hbox{$\#$~points~in~$W_\bfe$} & 
\hbox{local dimension $\Dim_x(F)$} & 
\hbox{local fiber dimension over $(\bfu,\ol\bfu)$}
 \\\hline
\hbox{14,144} & (4,4,4,4,4,4,4,4,4,4) & (0,0,0,0) \\
\hbox{678} &   (2,2,3,3,3,3,3,3,3,3) & (1,1,1,1) \\
\hbox{6} &     (2,2,1,1,1,1,1,1,1,1) & (1,1,1,1)
\end{array}
\]
The first collection of 14,144 
witness points correspond with rigid planar
pentads.  The second collection 
of 678 witness points correspond with degenerate
planar pentads.  The final collection
of 6 witness points correspond
with five degenerate planar pentads
arising from one of the five
nonconstant edges of $\bigtriangleup ABC$ or $\bigtriangleup DEF$ being 
zero (recall \hbox{$v_0 = \ol{v}_0=1$}).
The other witness point in this collection
is the unique point in $W_\bfe$ 
that is a general point on $X$ 
thereby confirming
$(2,2,1,0,1,0,1,0,1,0) \in \Dim(\calV(F))$
and the existence of nondegenerate exceptional
planar pentads.
On the other hand, working without the multihomogeneous structure,  leads to approximately $10^8$ witness points which we can't robustly compute.

\subsection*{Acknowledgements}
We thank Tim Duff for his helpful comments on the paper. 


\providecommand{\bysame}{\leavevmode\hbox to3em{\hrulefill}\thinspace}
\providecommand{\MR}{\relax\ifhmode\unskip\space\fi MR }
\providecommand{\MRhref}[2]{%
  \href{http://www.ams.org/mathscinet-getitem?mr=#1}{#2}
}
\providecommand{\href}[2]{#2}

\end{document}